\documentclass[11pt,twoside]{article}
\usepackage{times}
\usepackage{amsmath,amssymb,amsthm}
\usepackage{enumerate}
\usepackage{cite}
\usepackage{graphicx}
\usepackage{booktabs}
\usepackage{makecell}
\usepackage{epstopdf}
\usepackage{xcolor}

\allowdisplaybreaks

\newcommand{\supp}{\operatorname{supp}}
\newcommand{\R}{\mathbb R}

\newcommand{\p}{\partial}
\newcommand{\ve}{\varepsilon}
\newcommand{\f}{\frac}

\newcommand{\al}{\alpha}

\newcommand{\Acal}{\mathcal A}
\newcommand{\Bcal}{\mathcal B}
\newcommand{\dd}{\,\mathrm{d}}
\newcommand{\Ecal}{\mathcal E}
\newcommand{\Tcal}{\mathcal T}
\newcommand{\Rcal}{\mathcal R}

\allowdisplaybreaks

\theoremstyle{plain}
\newtheorem{theorem}{Theorem}[section]
\newtheorem{proposition}{Proposition}[section]
\newtheorem{lemma}[theorem]{Lemma}

\theoremstyle{definition}

\theoremstyle{remark}
\newtheorem{remark}{Remark}[section]

\numberwithin{equation}{section}

\title{Global small data radial symmetric solutions of 3D semilinear Euler-Poisson-Darboux equations}

\author{Li Qianqian$^{1}$,\quad Xin Qiao$^{2}$,\quad Yin Huicheng$^{3,}$\footnote{Li Qianqian (\texttt{214597007@qq.com}) and Yin Huicheng
    (\texttt{huicheng@} \texttt{nju.edu.cn}, \texttt{05407@njnu.edu.cn}) are supported by the National Natural Science Foundation of China
    (No.~12331007, No.~12571237).  Xin Qiao (\texttt{xinqiaoylsy@163.com}) is supported by the National Natural Science Foundation of China
    (No.~12261092) and the Scientific
Research Program of the Higher Education Institution of XinJiang (No. XJEDU2025P090).}\vspace{0.5cm}\\
\small
1. College of Mathematics and System Science, Xinjiang University, Urumqi 830017, China.\\
\small
2. School of Mathematics and Statistics, Yili Normal University, Yining, 835000, China.\\
\small
3. School of Mathematical Sciences and IMS, Nanjing Normal University, Nanjing 210023, China.\\
}
\vspace{0.5cm}

\begin{document}

\date{}

\maketitle
\thispagestyle{empty}

\begin{abstract}
For the 3D semilinear Euler-Poisson-Darboux equation
$\square u+\f{\mu}{t}\p_tu=|u|^p$, where $t\geq1$, $\mu>0$ and $p>1$,
it is conjectured that there is a critical exponent  $p_{crit}(3,\mu)=\max\{p_s(3+\mu), p_f(3)\}$
with the Strauss exponent
$p_s(3+\mu)=\f{\mu+4+\sqrt{\mu^2+16\mu+32}}{2(\mu+2)}$
and the Fujita exponent $p_f(3)=\f{5}{3}$ such that when $p>p_{crit}(3,\mu)$, the small data solution $u$ exists globally,
otherwise, when $1<p\le p_{crit}(3,\mu)$, the solution $u$ can blow up in finite time.
It is pointed out that for $1<p\le p_{crit}(3,\mu)$, the blowup of solution $u$ has been shown.
However, it is still open for the global existence of small solution $u$ when $p>p_{crit}(3,\mu)$.
Note that $p_{crit}(3,\mu)=p_s(3+\mu)$ for $0<\mu<\f{14}{5}$ and $p_{crit}(3,\mu)=p_f(3)$ for
$\mu\geq\f{14}{5}$.
In the recent paper \cite{LY-2}, the authors have obtained the
global small solution $u$ for $p>\max\{\f{5}{3},1+\f{2}{\mu}\}$ and $\mu\geq\f{14}{5}$. In this paper, by
utilizing the hypergeometric Riemann representation and establishing some delicate pointwise spacetime weighted estimates, we prove
the global existence of small data radial solution $u$ in the remaining
range of $\f{5}{3}<p\leq1+\f{2}{\mu}$ and $\f{14}{5}\leq\mu<3$. Therefore, for
the radially symmetric case and $\mu\geq\f{14}{5}$, the global existence problem of small data solution
$u$ is solved when $p>p_{crit}(3,\mu)=\f{5}{3}$.
\end{abstract}

\noindent
\textbf{Keywords.} Euler-Poisson-Darboux equation, critical exponent, global existence, radial solution,

\qquad \quad hypergeometric Riemann representation

\vskip 0.1 true cm

\noindent
\textbf{2010 Mathematical Subject Classification.} 35L70, 35L65, 35L67

\tableofcontents

\section{Introduction}\label{Sec1}
In this paper, we are concerned with the 3D semilinear Euler-Poisson-Darboux equation
\begin{equation}
\label{qq:1}
\left\{
\begin{aligned}
&\square u+\f{\mu}{t}\p_tu=|u|^p,
&&t\geq1,&x\in\Bbb R^3,\\
&(u,\p_tu)(1,x)=\ve(u_0,u_1)(x),
\end{aligned}
\right.
\end{equation}
where $\mu>0$, $p>1$, $\square=\p_t^2-\Delta$,
$\Delta=\p_1^2+\p_2^2+\p_3^2$, $\p_j=\p_{x_j}$ ($j=1,2,3$),
$(u_0,u_1)\in C_0^\infty(\Bbb R^3)\times C_0^\infty(\Bbb R^3)$ with
$\operatorname{supp}(u_0,u_1)\subset B(0,1)$, and $\ve>0$ is sufficiently small.
For the detailed introductions on
the physical background of the linear operator
$\square+\f{\mu}{t}\p_t$, one can see \cite{LWY} and the references therein.

Denote by $p_{crit}(n,\mu)=\max\{p_s(n+\mu),p_f(n)\}$ for $n\in\Bbb N$ and $n\ge 2$, where
$p_s(z)=\f{z+1+\sqrt{z^2+10z-7}}{2(z-1)}$ $(z>1)$ is the Strauss
exponent which corresponds to the positive root of the quadratic equation
$(z-1)p^2-(z+1)p-2=0$, and $p_f(n)=1+\f{2}{n}$ is the Fujita exponent.
The Strauss exponent
$p_s(n)=\f{n+1+\sqrt{n^2+10n-7}}{2(n-1)}$ $(n\geq2)$ arises from the critical
exponent problem for the global existence and blowup of
solutions to the $n$-dimensional  semilinear wave equation
$\square v=|v|^p$ (see \cite{Strauss}). The Fujita exponent
$p_f(n)=1+\f{2}{n}$ arises from the corresponding $n$-dimensional
semilinear parabolic equation
$\partial_tv-\Delta v=|v|^p$ (see \cite{Fuj}).
Set $p_{crit}(n,\mu)=\max\{p_s(n+\mu), p_f(n)\}$. Then direct computation
yields $p_{crit}(3,\mu)=p_s(3+\mu)$ for $0<\mu\le\f{14}{5}$ and
$p_{crit}(3,\mu)=p_f(3)=\f{5}{3}$ for $\mu\geq\f{14}{5}$.
As proposed in \cite{Rei1,Imai} and \cite{LY-2}, there is an open question as follows:

\vskip 0.1 true cm
\noindent
{\bf Open question (A).}
{\it For the 3D problem \eqref{qq:1},

\noindent
{\bf (A1)} if $\mu\geq\f{14}{5}$ and
$p>p_{crit}(3,\mu)=\f{5}{3}$, the small solution $u$ exists globally;

\noindent
{\bf (A2)} if $0<\mu<\f{14}{5}$ and
$p>p_{crit}(3,\mu)=p_s(3+\mu)$, there exists a global small data solution $u$.
}

\medskip

Note that the finite time blowup result for the corresponding $n-$dimensional problem \eqref{qq:1} has been established for
$1<p\leq p_{crit}(n,\mu)=\max\{p_s(n+\mu),p_f(n)\}$, see
\cite{FLT,IS,LTW,PR-0,PR,TL2,W1}. On the other hand,
in recent paper \cite{LY-2}, which also contains a detailed summary on the known global existence results for
$p>p_{crit}(n,\mu)$ and the remaining open cases (see also \cite{DA-0}-\cite{Rei1}, \cite{HL}-\cite{HLY}, \cite{LZ-1}-\cite{LY}
and \cite{P1}), the authors utilize the vector field method to prove that  the small solution of  \eqref{qq:1} exists globally
when
\begin{equation}\label{YHC-01}
\begin{aligned}
\text{$p>\max\big\{\f{5}{3},1+\f{2}{\mu}\big\}$ and $\mu\geq\f{14}{5}$.}
\end{aligned}
\end{equation}

Due to
\begin{equation}\label{YHC-02}
\begin{aligned}
\text{$\max\big\{\f{5}{3},1+\f{2}{\mu}\big\}=\f{5}{3}$ for $\mu\geq3$},
\end{aligned}
\end{equation}
it follows from Theorem 1.1 of \cite{LY-2} and \eqref{YHC-01}
that the {\bf Open question (A1)} has already been solved when $\mu\geq3$. For
$\f{14}{5}\leq\mu<3$, however, the previous global existence result in Theorem 1.1 of \cite{LY-2} requires
the restriction of $p>1+\f{2}{\mu}$, which leaves the scope
\begin{equation}\label{YHC-03}
\begin{aligned}
\text{$\f{5}{3}<p\leq1+\f{2}{\mu}$ with $\f{14}{5}\leq\mu<3$}
\end{aligned}
\end{equation}
such that {\bf Open question (A1)} is still open.
In the present paper, by use of the different method from
that in \cite{LY-2}, we solve problem  \eqref{qq:1}
for the radial symmetric solution $u$ under the condition \eqref{YHC-03}.
The main result can be  stated as
\begin{theorem}
\label{thm:main}
For $\f{5}{3}<p\leq1+\f{2}{\mu}$ and
$\f{14}{5}\leq\mu<3$, it is assumed that $(u_0,u_1)\in C_0^\infty(\Bbb R^3)\times C_0^\infty(\Bbb R^3)$ are radially symmetric. Then there exists a
constant $\ve_0>0$ such that when $0<\ve\leq\ve_0$, problem
\eqref{qq:1} admits a unique global  radial solution
$u\in C\bigl([1,\infty)\times\Bbb R^3\bigr)\cap
C^2\bigl([1,\infty)\times(\Bbb R^3\setminus\{0\})\bigr)$.
\end{theorem}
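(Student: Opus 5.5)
We plan to obtain the global solution as the fixed point of the Duhamel map $u=\ve u^0+L(|u|^p)$ in a weighted $L^\infty$ space. Here $u^0$ solves the linear Euler-Poisson-Darboux problem with data $(u_0,u_1)$, and $L$ is the inhomogeneous solution operator of $\square+\f{\mu}{t}\p_t$ with zero data at $t=1$. Radial symmetry reduces everything to the $(t,r)$ half-plane. Setting $v=ru$, the equation becomes
\[
\p_t^2v-\p_r^2v+\f{\mu}{t}\p_tv=r|u|^p .
\]
This is a 1D Euler-Poisson-Darboux type operator, and its Riemann function is explicit via the Gauss hypergeometric function: for a source point $(s,y)$ and observation point $(t,r)$,
\[
R(t,r;s,y)=\Big(\f{s}{t}\Big)^{\mu/2}F\Big(\f{\mu}{2},\f{\mu}{2};1;z\Big),\qquad z=\f{(t-s)^2-(r-y)^2}{4ts},
\]
up to the precise normalization. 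With odd reflection in $r$ this gives an explicit representation of $u$ on the backward characteristic triangle. The first step is to record this hypergeometric Riemann representation together with the two needed asymptotics of $F$: it is bounded for $z$ near $0$, and for $z\to-\infty$ (equivalently in the limit $1-z\to\infty$ after a Pfaff transform) it behaves like $|z|^{-\mu/2}\log|z|$, since $a=b$. These yield pointwise bounds on $R$ that capture both the wave-like behavior near the light cone and the diffusive, Fujita-type decay in the interior.

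The second step is the linear estimate for $u^0$. Since the data are supported in $B(0,1)$ at $t=1$, the solution vanishes for $r>t$, and we aim for
\[
|u^0(t,r)|\lesssim \langle t\rangle^{-\mu/2}\cdots
\]
More precisely, we choose a weight of the form
\[
W(t,r)=\langle t+r\rangle^{a}\,\langle t-r\rangle^{b}
\]
with exponents tuned to the decay that $u^0$ actually enjoys: roughly $t^{-1}$ near the cone times a $\langle t-r\rangle^{-(\mu-1)/2}$-type factor, and $t^{-3/2}$ heat-like decay, capped by the damping rate, inside. The exponents $a,b$ are chosen, possibly with a small loss $\delta$, so that $\|Wu^0\|_{L^\infty}\lesssim1$. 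The norm is $\|u\|_X=\sup W|u|$, and we work with the usual ball of radius $2C\ve$.

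The third and main step is the nonlinear estimate
\[
\|W\,L(|u|^p)\|_{L^\infty}\lesssim \|u\|_X^p ,
\]
which is the heart of the proof and the expected main obstacle. We insert $|u(s,y)|\le W(s,y)^{-1}\|u\|_X$ into the Riemann representation. We then split the integration domain into the region near the light cone ($|s-y|\lesssim1$ or $y\ge s/2$) and the interior region ($y\le s/2$). Each region is further split according to whether $z$ is small or large, that is, whether $(t-s)\lesssim\sqrt{ts}$ or not, and according to whether $s\le t/2$ or $s\ge t/2$.

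In the interior, far-from-source region, the decay of $F$ supplies the factor $(s/t)^{\mu/2}|z|^{-\mu/2}$. The resulting $s$-integral converges precisely when $p>5/3$, which is the Fujita condition. The restriction $p\le 1+\f{2}{\mu}$ means the time integrability coming from $t^{-\mu}$-type damping alone fails, and this is exactly where the earlier vector-field method needed $p>1+\f{2}{\mu}$. The hypergeometric kernel's additional $|z|^{-\mu/2}$ gain in $y$ should compensate, provided $\mu\ge \f{14}{5}$ keeps the Strauss-type condition near the cone, $p>p_s(3+\mu)$, automatically satisfied; we will check this in the near-cone integrals. The case $\mu<3$ is handled by allowing the weight exponent $b$ to depend on $p$ and $\mu$. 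The logarithm from $a=b$ will be absorbed by the $\delta$-loss.

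The final step is the contraction estimate. Using $\big||u|^p-|w|^p\big|\le p(|u|^{p-1}+|w|^{p-1})|u-w|$, the same integral bounds give
\[
\|L(|u|^p-|w|^p)\|_X\lesssim \ve^{p-1}\|u-w\|_X ,
\]
so for small $\ve$ the Duhamel map is a contraction on the ball in $X$. Local well-posedness and the regularity of the representation then upgrade the resulting fixed point to $C\cap C^2$ away from $r=0$. Uniqueness follows in the standard way.
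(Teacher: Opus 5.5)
Your overall architecture (a hypergeometric Riemann representation for the radial 1D reduction, a weighted $L^\infty$ norm, and a contraction argument) matches the paper's. However, the step you call the heart of the proof rests on a misidentified kernel, and the mechanism you invoke to reach $p\le 1+\frac{2}{\mu}$ does not exist.

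Set $z=\frac{(t-s)^2-(r-y)^2}{4ts}\ge 0$ inside the cone and $\lambda=\frac{\mu}{2}-1\in[\frac25,\frac12)$. The Duhamel kernel for $v=ru$ is $\frac12(s/t)^{\mu/2}P_\lambda(1+2z)=\frac12(s/t)^{\mu/2}\,{}_2F_1(-\lambda,\lambda+1;1;-z)$. This is exactly $(s/t)^{\mu/2}c_\lambda\Ecal_\lambda(t,s,r-y)$ in the paper's notation once $w=t^{\mu/2}u$ is undone.

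Your ${}_2F_1(\frac{\mu}{2},\frac{\mu}{2};1;z)$ cannot be the right kernel, for two reasons:
- It blows up at $z=1$, and $z$ reaches $1$ inside the cone as soon as $t>(3+2\sqrt2)s$.
- Even with the sign of $z$ reversed, it fails the check $\mu=2$. There $tv$ solves the free 1D wave equation, so the kernel must be $\frac{s}{2t}$, not $\frac{s}{t}(1+z)^{-1}$.

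The asymptotics are therefore wrong in kind. $P_\lambda(1+2z)\sim c\,z^{\lambda}$ \emph{grows} as $z\to\infty$, with no logarithm. So for $s\ll t$ and $|r-y|\ll t$ the kernel is of size $s/t$, not $(s/t)^{\mu}\log(t/s)$. The ``additional $|z|^{-\mu/2}$ gain'' you expect to compensate for $p\le1+\frac2\mu$ is off by a factor of order $(t/s)^{\mu-1}$ up to a logarithm. Every interior estimate built on it would fail for the true operator.

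What must actually be done is to \emph{absorb} the kernel growth $|\Ecal_\lambda|\lesssim Q^\lambda$. In the paper, after $w=t^{\mu/2}u$ the source is $t^{-\beta}|w|^p$ with $\beta=\frac{\mu}{2}(p-1)\le1$. The weight $\Acal\,\Bcal^{\kappa-1}$ must satisfy $\kappa<2-\lambda$ (to beat $Q^\lambda$) together with $\frac{2-\beta}{p-1}<\kappa<p+\beta-1$ (to integrate the source). The constraints $\kappa<2-\lambda$ and $\kappa>\frac{2-\beta}{p-1}$ are compatible precisely because $(2-\lambda)(p-1)-(2-\beta)=3p-5>0$. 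That is where $p>\frac53$ enters, not through an $s$-integral fed by kernel decay.

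You never fix your exponents $a,b$. For orientation, the linear part satisfies $|u^0|\lesssim t^{-\mu/2}\Acal^{-1}\Bcal^{\lambda-1}$. This is $t^{-1-\mu/2}$ near the cone and $t^{-3}$ for $r\le t/2$, and the weight has to be tuned to exactly this.

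Finally, your plan does not say how $u=v/r$ acquires the factor $\Acal^{-1}$ when $r\ll t$. The paper gets it from the odd-extension cancellation $\Ecal_\lambda(t,s,r-q)-\Ecal_\lambda(t,s,r+q)=-\int_{|r-q|}^{r+q}\p_y\Ecal_\lambda(t,s,y)\dd y$. This yields the bound $\frac{rqB_0^\lambda}{A_0(B_0-\eta)}(\xi-\eta)^{-\lambda}$ on the interior part of the Duhamel integral. The paper singles this out as the essential ingredient in the range $\frac53<p\le1+\frac2\mu$, and it has no counterpart in your proposal.
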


\begin{remark}\label{re1}
Combining the global existence result in Theorem 1.1 of \cite{LY-2} for
$p>\max\{\f{5}{3},1+\f{2}{\mu}\}$ and $\mu\geq\f{14}{5}$
in the present paper,
{\bf Open question (A1)} is completely solved for the radially symmetric case.
\end{remark}

\begin{remark}\label{re-2}
For problem \eqref{qq:1}, the global existence of small data solution $u$
has been known for $\mu\in(0,1)\cup(1,2)$ and $p>p_{crit}(3,\mu)$, see
\cite[Remark~1.14]{HLY}. Thus,
{\bf Open question (A2)} remains open  for $\mu=1$ or
$\mu\in[2,\f{14}{5})$ and $p>p_{crit}(3,\mu)$.
\end{remark}

Next, we give the comments for the proof of Theorem~\ref{thm:main}.
Motivated by the weighted physical-space method in \cite{Asakura1986,Rei1}, we introduce
the following Liouville transformation to rewrite the equation in problem \eqref{qq:1} for the radial solution $u(t,x)=u(t,r)$ $(r=|x|
=\sqrt{x_1^2+x_2^2+x_3^2})$:
\begin{equation}\label{YHC-04}
\begin{aligned}
\text{$\psi(t,r)=rw(t,r)$ with $ w(t,r)=t^{\f{\mu}{2}}u(t,r)$.}
\end{aligned}
\end{equation}
Extending $w$ evenly to $r\in\R$, one has that
\begin{equation}
\label{qq:method-reduction}
\left\{ \enspace
\begin{aligned}
&\p_t^2\psi-\p_r^2\psi-\f{\lambda(\lambda+1)}{t^2}\psi=G_w(t,r)=rt^{-\beta}|w(t,|r|)|^p,\\
&\psi|_{t=1}=\ve\psi_0(r)=\ve ru_0(r), \quad \p_t\psi|_{t=1}=\ve\psi_1(r)=\ve r\big(u_1(r)+\f{\mu}{2}u_0(r)\big),
\end{aligned}
\right.
\end{equation}
where
\begin{equation}\label{qq:method-reduction'}
\lambda=\f{\mu-2}{2},\quad \beta=\f{\mu}{2}(p-1).
\end{equation}
For $\mu=2$ and then $\lambda=0$, the homogeneous part of
\eqref{qq:method-reduction} becomes the 1D wave equation
which has been treated in \cite{Rei1}. For
$\f{14}{5}\leq\mu<3$, that is,
$\f{2}{5}\leq\lambda<\f{1}{2}$, the extra troublesome term
$-\lambda(\lambda+1)t^{-2}\psi$ appears in \eqref{qq:method-reduction}.  To investigate
\eqref{qq:method-reduction}, we first give the direct expression for the solution of \eqref{qq:method-reduction}
by making full use of the Riemann representation formulae whose
kernel is  expressed in terms of the Gauss hypergeometric function.
Based on this, some suitably spacetime weighted pointwise estimates can be established for \eqref{qq:method-reduction}.
The main ingredients are summarized as follows.

\medskip

${\bf \bullet}$ Set
\begin{equation}
\label{qq:method-kernel}
\Ecal_\lambda(t,s,\sigma)
=(ts)^{-\lambda}\bigl((t+s)^2-\sigma^2\bigr)^\lambda
{}_2F_1\big(-\lambda,-\lambda;1;
\f{(t-s)^2-\sigma^2}{(t+s)^2-\sigma^2}\big),
\end{equation}
where the Gauss hypergeometric function ${}_2F_1(\alpha,\gamma;\delta;z)$ admits the following expression for $|z|<1$,
\[
{}_2F_1(\alpha,\gamma;\delta;z)
=\sum_{k=0}^{\infty}\f{(\alpha)_k(\gamma)_k}{(\delta)_k k!}z^k
\]
with $(a)_k=a(a+1)\cdots(a+k-1)$ being the Pochhammer symbol. One can see more properties for  the
hypergeometric functions in \cite[Section~15]{OLBC}. By
\eqref{qq:method-kernel} and \eqref{qq:method-reduction}, one formally has (see Section~\ref{sec:preliminaries} below)
\begin{equation}
\label{Q2-01}
\begin{aligned}
w(t,r)=w_{\rm lin}(t,r)+(\Tcal w)(t,r),
\end{aligned}
\end{equation}
where
\begin{equation*}
\label{Q2-001}
\begin{aligned}
w_{\rm lin}(t,r)&={}\f{\ve}{2r}
\bigl[\psi_0(r+t-1)+\psi_0(r-t+1)\bigr]
+\f{c_\lambda\ve}{r}
\int_{r-t+1}^{r+t-1}
K_0(t,r-\rho)\psi_0(\rho)\mathrm d\rho
\\
&+\f{c_\lambda\ve}{r}
\int_{r-t+1}^{r+t-1}
K_1(t,r-\rho)\psi_1(\rho)\,\mathrm d\rho,\\
(\Tcal w)(t,r)&=\f{c_\lambda}{r}
\int_1^t\int_{r-(t-s)}^{r+(t-s)}
\Ecal_\lambda(t,s,r-\rho)G_w(s,\rho)\mathrm d\rho\mathrm ds\\
\end{aligned}
\end{equation*}
with $K_1(t,\sigma)=\Ecal_\lambda(t,1,\sigma)$, $K_0(t,\sigma)=
-\left.\p_s\Ecal_\lambda(t,s,\sigma)\right|_{s=1}$ and $c_\lambda=2^{-(2\lambda+1)}$.

Therefore, we naturally define the mapping from \eqref{Q2-01}
\begin{equation}
\label{Q2-02}
\begin{aligned}
\Phi(w)(t,r)={}&w_{\rm lin}(t,r)+(\Tcal w)(t,r).
\end{aligned}
\end{equation}
If the mapping \eqref{Q2-02} is shown to have a unique fixed point in such a weighted radial function space
$X_\kappa$ equipped with the norm
\begin{equation}
\label{qq:method-space}
\|w\|_{X_\kappa}
=
\sup_{\substack{t\geq1\\0\leq r\leq t}}
\big\{
(1+t+r)(1+t-r)^{\kappa-1}|w(t,r)|
+
\f{(1+t+r)(1+t-r)^{\kappa-1}}{1+r}
|\p_r(rw)(t,r)|
\big\}
\end{equation}
with $\f{2-\beta}{p-1}
<\kappa<\min\{p+\beta-1,\,2-\lambda\}$, $\lambda=\f{\mu-2}{2}$ and $\beta=\f{\mu}{2}(p-1)$,
then Theorem \ref{thm:main} can be proved (see details in Sections~\ref{sec:linear-estimates}-
\ref{Sect5}).

\medskip

${\bf \bullet}$ Note that
$G_w(s,\rho)=\rho s^{-\beta}|w(s,|\rho|)|^p$
is odd in $\rho$, one then has
\begin{equation}
\label{qq:method-pairing}
\begin{aligned}
c_\lambda^{-1}r(\Tcal w)(t,r)
={}&\int_1^t\int_{|t-s-r|}^{t-s+r}
\Ecal_\lambda(t,s,r-q)G_w(s,q)\,\mathrm dq\,\mathrm ds\\
&+\int_1^{t-r}\int_0^{t-s-r}
\bigl[\Ecal_\lambda(t,s,r-q)-\Ecal_\lambda(t,s,r+q)\bigr]
G_w(s,q)\,\mathrm dq\,\mathrm ds.
\end{aligned}
\end{equation}
It is emphasized that the kernel difference
$\Ecal_\lambda(t,s,r-q)-\Ecal_\lambda(t,s,r+q)$
in the second integral of \eqref{qq:method-pairing} will play essential roles in
deriving the contractivity of $(\Tcal w)$ for
$\f{5}{3}<p\leq1+\f{2}{\mu}$. The corresponding estimates are given in
Section~\ref{sec:nonlinear-Riemann-operator}.

\medskip

Based on the conclusions above, by returning to the transformation
$u(t,r)=t^{-\mu/2}w(t,r)$, then the 3D problem \eqref{qq:1} admits a global small data radial
symmetric solution $u\in C\bigl([1,\infty)\times\Bbb R^3\bigr)
\cap C^2\bigl([1,\infty)\times(\Bbb R^3\setminus\{0\})\bigr)$.

This paper is organized as follows. In Section~2, at first, we apply the
Liouville transformation to reformulate the original
3D problem \eqref{qq:1} with the radial symmetric initial data into a 1D semilinear wave equation including a
time-dependent zeroth-order term. Secondly, we give the representation formulae
for the solutions to the corresponding homogeneous and inhomogeneous linear problems.
In addition, some basic properties of
the Gauss hypergeometric functions are also recalled.
In Sections~3-4, some weighted spacetime pointwise estimates for the
homogeneous and  inhomogeneous problems are established, respectively. Based on the estimates
obtained in Sections~3--4 and the contraction mapping principle,  the proof of Theorem~\ref{thm:main}
is completed in  Section \ref{Sect5}.

\vskip 0.3 true cm

\noindent\textbf{Notations}

\vskip 0.1 true cm

$\bullet$ For nonnegative quantities $f$ and $g$, $f\lesssim g$ means
$f\leq Cg$ for some generic constant $C>0$ independent of $\ve$ and
the spacetime variables, and $f\sim g$ means
$g\lesssim f\lesssim g$.

\vskip 0.1 true cm

$\bullet$
For $y\in\R$, set $\langle y\rangle=(1+|y|^2)^{1/2}$.

\vskip 0.1 true cm

$\bullet$ For $\f{5}{3}<p\leq1+\f{2}{\mu}$ and $\f{14}{5}\leq\mu<3$, denote
\begin{equation*}\label{nota04}
a=\f{\mu}{2},\quad
\lambda=a-1=\f{\mu-2}{2}\in[\f{2}{5}, \f{1}{2}),\quad
\beta=a(p-1)=\f{\mu}{2}(p-1)\in(0, 1].
\end{equation*}

$\bullet$  For $t\geq s\geq1$ and $|\sigma|\leq t-s$, define
\begin{equation*}\label{nota01}
D(t,s,\sigma)=(t+s)^2-\sigma^2,\quad
Q(t,s,\sigma)=\f{D(t,s,\sigma)}{4ts}\geq1
\end{equation*}
and
\begin{equation*}\label{nota02}
0\leq z(t,s,\sigma)=
\f{(t-s)^2-\sigma^2}{(t+s)^2-\sigma^2}
=1-Q(t,s,\sigma)^{-1}<1.
\end{equation*}

\section{Preliminaries}\label{sec:preliminaries}

In this section, we first reformulate problem \eqref{qq:1} with the radial symmetric initial data into a 1D
semilinear wave equation with a time-dependent zeroth-order term. Subsequently, the representation formulae for the
solutions to the corresponding homogeneous
and inhomogeneous linear problems are given.

\subsection{Liouville transformation and reformulation}
\label{liou}
For the radial symmetric solution $u$ of problem \eqref{qq:1}, we write $u(t,x)=u(t,r)$ with $r=|x|$.
By introducing the Liouville transformation
$w(t,r)=t^{\f{\mu}{2}}u(t,r)$,
then it follows from \eqref{qq:1} that
\begin{equation}
\label{eq:w-equation}
\left\{
\begin{aligned}
&\p_t^2w-\p_r^2w-\f{2}{r}\p_r w
-\f{\lambda(\lambda+1)}{t^2}w
=t^{-\beta}|w|^p,
&&t\geq1,\quad r\geq0,\\
&(w,\p_tw)(1,r)
=\ve\bigl(u_0,u_1+a u_0\bigr)(r),
\end{aligned}
\right.
\end{equation}
where $a$, $\lambda$, and $\beta$ have been defined in Section \ref{Sec1}. We extend $w(t,\cdot)$ evenly to $\R$ and set
\begin{equation}
\label{eq:psi}
\psi(t,r)=rw(t,r),\qquad r\in\R.
\end{equation}
Then it follows from \eqref{eq:w-equation} that
\begin{equation}
\label{eq:psi-equation}
\left\{
\begin{aligned}
&\p_t^2\psi-\p_r^2\psi
-\f{\lambda(\lambda+1)}{t^2}\psi
=G_w(t,r),
&&t\geq1,\quad r\in\R,\\
&(\psi,\p_t\psi)(1,r)
=\ve(\psi_0,\psi_1)(r)
=\ve r\bigl(u_0,u_1+a u_0\bigr)(r),
\end{aligned}
\right.
\end{equation}
where
\begin{equation}
\label{eq:Gw}
G_w(t,r)=rt^{-\beta}|w(t,|r|)|^p.
\end{equation}
Note that $\psi$ and $G_w$ are odd in $r$, and in particular,
$\psi(t,0)=0$.

\subsection{The hypergeometric kernel}

We now recall some basic properties on
the Gauss hypergeometric function, which will be used later.

\begin{lemma}[Lemma 1 of \cite{P2}]
\label{lem:hypergeometric}
Let $\alpha,\gamma\in\R$ and
$\delta\in\R\setminus\{0,-1,-2,\ldots\}$. Then the Gauss
hypergeometric function ${}_2F_1(\alpha,\gamma;\delta;z)$ satisfies
the following properties:
\begin{enumerate}[(i)]
\item ${}_2F_1(\alpha,\gamma;\delta;z)$ is a solution to the  hypergeometric
differential equation
\begin{equation} \label{eq:hyp-ode} z(1-z)H''(z) +\bigl[\delta-(\alpha+\gamma+1)z\bigr]H'(z) -\alpha\gamma H(z)=0; \end{equation}

\item
\[
{}_2F_1(\alpha,\gamma;\delta;0)=1.
\]
\end{enumerate}
\end{lemma}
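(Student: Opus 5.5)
The statement is the classical Lemma 1 of \cite{P2}, so the plan is to verify both properties directly from the power series representation ${}_2F_1(\alpha,\gamma;\delta;z)=\sum_{k\ge0}\f{(\alpha)_k(\gamma)_k}{(\delta)_k k!}z^k$. This series is well defined because $\delta\notin\{0,-1,-2,\ldots\}$ guarantees $(\delta)_k\neq0$ for every $k$. Its radius of convergence is at least $1$: by the ratio test, $\bigl|\f{(\alpha+k)(\gamma+k)}{(\delta+k)(k+1)}\bigr|\to1$, and the series terminates if $\alpha$ or $\gamma$ is a nonpositive integer. Hence on $|z|<1$ the function is analytic, and it may be differentiated term by term.

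Property (ii) is immediate. At $z=0$ only the $k=0$ term survives, and $(\alpha)_0=(\gamma)_0=(\delta)_0=0!=1$ by convention, so the value is $1$.

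For property (i), write $H(z)=\sum_k c_kz^k$ with $c_k=\f{(\alpha)_k(\gamma)_k}{(\delta)_k k!}$. The Pochhammer identity $(a)_{k+1}=(a)_k(a+k)$ gives the recursion
\[
(k+1)(k+\delta)c_{k+1}=(k+\alpha)(k+\gamma)c_k .
\]
Substitute the series into $z(1-z)H''+[\delta-(\alpha+\gamma+1)z]H'-\alpha\gamma H$ and collect the coefficient of $z^k$. The terms $zH''+\delta H'$ contribute $(k+1)k\,c_{k+1}+\delta(k+1)c_{k+1}=(k+1)(k+\delta)c_{k+1}$. The terms $-z^2H''-(\alpha+\gamma+1)zH'-\alpha\gamma H$ contribute $-[k(k-1)+(\alpha+\gamma+1)k+\alpha\gamma]c_k=-(k+\alpha)(k+\gamma)c_k$. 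By the recursion these two contributions cancel, so every coefficient of $z^k$ vanishes and $H$ solves \eqref{eq:hyp-ode}.

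There is no real obstacle here. The only points needing care are that term-by-term differentiation is justified inside the disc of convergence, and that the factorization $k^2+(\alpha+\gamma)k+\alpha\gamma=(k+\alpha)(k+\gamma)$ is checked correctly.
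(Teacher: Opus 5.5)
Your argument is correct and complete. The recursion $(k+1)(k+\delta)c_{k+1}=(k+\alpha)(k+\gamma)c_k$ follows from $(a)_{k+1}=(a)_k(a+k)$. Your coefficient bookkeeping for $zH''+\delta H'$ and for $-z^2H''-(\alpha+\gamma+1)zH'-\alpha\gamma H$ is right, and $k(k-1)+(\alpha+\gamma+1)k+\alpha\gamma=(k+\alpha)(k+\gamma)$ checks out. The convergence discussion is also sound: either the series terminates, or $c_k\neq0$ for all $k$ and the ratio test gives radius $1$.

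The paper gives no proof of this lemma; it simply imports it as Lemma 1 of \cite{P2}, which is the classical theory of the Gauss equation (cf.\ \cite[Section~15]{OLBC}). So you have not taken a different route so much as supplied a self-contained verification of a fact the authors take off the shelf. The citation buys brevity. Your version makes explicit that the statement lives on the disc $|z|<1$, which the lemma leaves implicit.

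That domain is exactly what the paper needs. The lemma is applied in Lemma~\ref{lem:kernel-equation} only with $\alpha=\gamma=-\lambda$, $\delta=1$ and $z=1-Q^{-1}\in[0,1)$. The large-$Q$ regime, where $z$ approaches $1$, is handled separately in Lemma~\ref{lem:Phi} through a connection formula, and that is not part of this lemma.
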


Before deriving the representation formulae for the corresponding homogeneous and
inhomogeneous linear problems of \eqref{eq:psi-equation}, we next establish the following
properties of the kernel $\Ecal_\lambda$ defined in \eqref{qq:method-kernel}.

\begin{lemma}
\label{lem:kernel-equation}
For $t>s\geq1$ and $|\sigma|<t-s$, it holds that
\begin{equation}
\label{eq:kernel-t}
\big(
\p_t^2-\p_\sigma^2-\f{\lambda(\lambda+1)}{t^2}
\big)
\Ecal_\lambda(t,s,\sigma)=0
\end{equation}
and
\begin{equation}
\label{eq:kernel-s}
\big(
\p_s^2-\p_\sigma^2-\f{\lambda(\lambda+1)}{s^2}
\big)
\Ecal_\lambda(t,s,\sigma)=0.
\end{equation}
Moreover, for $|\sigma|=t-s$, one has
\begin{equation}
\label{eq:kernel-characteristic}
\Ecal_\lambda(t,s,\pm(t-s))=4^\lambda,
\quad
2c_\lambda\Ecal_\lambda(t,s,\pm(t-s))=1.
\end{equation}
\end{lemma}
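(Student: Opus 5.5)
We compute directly, writing $\Ecal_\lambda=(ts)^{-\lambda}D^\lambda H(z)$ with $H(z)={}_2F_1(-\lambda,-\lambda;1;z)$, $D=(t+s)^2-\sigma^2$ and $z=\f{(t-s)^2-\sigma^2}{D}$. The characteristic identity \eqref{eq:kernel-characteristic} is immediate. When $|\sigma|=t-s$ we have $z=0$, so Lemma~\ref{lem:hypergeometric}(ii) gives $H=1$. Also $D=(t+s)^2-(t-s)^2=4ts$, so
\[
\Ecal_\lambda=(ts)^{-\lambda}(4ts)^\lambda=4^\lambda .
\]
Since $c_\lambda=2^{-(2\lambda+1)}$, it follows that $2c_\lambda 4^\lambda=1$.

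For \eqref{eq:kernel-t} we use the form $\Ecal_\lambda=(4Q)^\lambda H(1-Q^{-1})$, where $Q=D/(4ts)$. The prefactor simplifies because $(ts)^{-\lambda}D^\lambda=4^\lambda Q^\lambda$. Thus $\Ecal_\lambda=4^\lambda F(Q)$ with $F(Q)=Q^\lambda H(1-1/Q)$, and $F$ depends on $(t,\sigma)$ only through $Q$. The chain rule gives
\[
(\p_t^2-\p_\sigma^2)F=F''(Q)\bigl(Q_t^2-Q_\sigma^2\bigr)+F'(Q)\bigl(Q_{tt}-Q_{\sigma\sigma}\bigr).
\]
We then compute the derivatives of $Q=\f{(t+s)^2-\sigma^2}{4ts}$:
\[
Q_\sigma=-\f{\sigma}{2ts},\quad Q_{\sigma\sigma}=-\f{1}{2ts},\quad Q_t=\f{t^2-s^2+\sigma^2}{4t^2s},\quad Q_{tt}=\f{s^2-\sigma^2}{2t^3s}.
\]
The key step is to express $Q_t^2-Q_\sigma^2$ and $Q_{tt}-Q_{\sigma\sigma}$ as $t^{-2}$ times functions of $Q$ alone. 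We expect
\[
Q_t^2-Q_\sigma^2=\f{Q(Q-1)}{t^2},\qquad Q_{tt}-Q_{\sigma\sigma}=\f{2Q-1}{t^2}.
\]
This is checked by expanding $16t^4s^2(Q_t^2-Q_\sigma^2)=(t^2-s^2+\sigma^2)^2-4t^2\sigma^2$ and comparing it with $4t^2s^2\,Q(Q-1)\cdot 4$, using
\[
4ts(Q-1)=(t-s)^2-\sigma^2,\qquad 4tsQ=(t+s)^2-\sigma^2 .
\]
The product $\bigl((t-s)^2-\sigma^2\bigr)\bigl((t+s)^2-\sigma^2\bigr)$ indeed equals $(t^2-s^2)^2-2\sigma^2(t^2+s^2)+\sigma^4$, and this matches. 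The second identity is a one-line check.

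With these identities, \eqref{eq:kernel-t} reduces to the Legendre-type ODE
\[
Q(Q-1)F''+(2Q-1)F'-\lambda(\lambda+1)F=0 .
\]
Note that $\lambda(\lambda+1)=\nu(\nu+1)$ with $\nu=-\lambda-1$. By symmetry of $Q$ in $t\leftrightarrow s$, the same ODE also gives \eqref{eq:kernel-s}.

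The main obstacle is verifying that $F(Q)=Q^\lambda\,{}_2F_1(-\lambda,-\lambda;1;1-Q^{-1})$ solves this ODE. We would substitute $z=1-1/Q$, so that $Q=(1-z)^{-1}$, $dQ/dz=(1-z)^{-2}$, and write $F=(1-z)^{-\lambda}H(z)$. We then convert $F'$ and $F''$ into $z$-derivatives of $H$. After multiplying through by an appropriate power of $(1-z)$, the equation should reduce to
\[
z(1-z)H''+\bigl[1-(1-2\lambda)z\bigr]H'-\lambda^2H=0,
\]
which is \eqref{eq:hyp-ode} with $\alpha=\gamma=-\lambda$ and $\delta=1$; Lemma~\ref{lem:hypergeometric}(i) then finishes the proof. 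This is a classical Pfaff/Kummer-type transformation for Legendre functions, and the bookkeeping of the $(1-z)$ powers is the only delicate point. If it proves messy, a fallback is to check the equation directly on the power series in $z$ coefficient by coefficient.
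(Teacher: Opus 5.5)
Your proposal is correct and follows essentially the same route as the paper: you write $\Ecal_\lambda=4^\lambda F(Q)$, use $Q_t^2-Q_\sigma^2=Q(Q-1)/t^2$ and $Q_{tt}-Q_{\sigma\sigma}=(2Q-1)/t^2$ to reduce to $Q(Q-1)F''+(2Q-1)F'-\lambda(\lambda+1)F=0$, substitute $z=1-Q^{-1}$ to reach the hypergeometric equation with $\alpha=\gamma=-\lambda$, $\delta=1$, obtain the $s$-equation by $t\leftrightarrow s$ symmetry, and get the characteristic values from $z=0$. The bookkeeping you flagged does work out: the left-hand side of the Legendre-type equation equals $Q^{\lambda}(1-z)\bigl[z(1-z)H''+(1-(1-2\lambda)z)H'-\lambda^2H\bigr]$, exactly as in the paper.
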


\begin{proof}
Set
\[
H(z)={}_2F_1(-\lambda,-\lambda;1;z),
\quad
\mathcal F_\lambda(Q)=Q^\lambda H(1-Q^{-1}).
\]
Then
\[
\Ecal_\lambda(t,s,\sigma)
=4^\lambda\mathcal F_\lambda\bigl(Q(t,s,\sigma)\bigr).
\]
We now prove \eqref{eq:kernel-t}. Fixing $s$ and differentiating
$Q=Q(t,s,\sigma)$ yield
\[
\p_tQ=\f{t^2-s^2+\sigma^2}{4st^2},
\quad
\p_\sigma Q=-\f{\sigma}{2ts}.
\]
Then it follows from a direct computation that
\begin{equation}
\label{eq:Q-identities-t}
(\p_t Q)^2-(\p_\sigma Q)^2
=\f{Q(Q-1)}{t^2},
\quad
\p_t^2Q-\p_\sigma^2 Q
=\f{2Q-1}{t^2}.
\end{equation}
In addition, one has
\begin{align}
(\p_t^2-\p_\sigma^2)\Ecal_\lambda
&=
4^\lambda
\left[
\mathcal F_\lambda''(Q)
\big((\p_t Q)^2-(\p_\sigma Q)^2\big)
+\mathcal F_\lambda'(Q)
\big(\p_t^2Q-\p_\sigma^2 Q\big)
\right]\notag\\
&=
\f{4^\lambda}{t^2}
\left[
Q(Q-1)\mathcal F_\lambda''(Q)
+(2Q-1)\mathcal F_\lambda'(Q)
\right].
\label{eq:E-chain}
\end{align}
Let $z=1-Q^{-1}$. Due to
$\mathcal F_\lambda(Q)=Q^\lambda H(z)$, a direct calculation gives
\begin{align}
&Q(Q-1)\mathcal F_\lambda''(Q)
+(2Q-1)\mathcal F_\lambda'(Q)
-\lambda(\lambda+1)\mathcal F_\lambda(Q)\notag\\
&\qquad
=
Q^\lambda(1-z)
\left[
z(1-z)H''(z)
+\bigl(1-(1-2\lambda)z\bigr)H'(z)
-\lambda^2H(z)
\right].
\label{eq:F-hyper}
\end{align}
By Lemma~\ref{lem:hypergeometric} (i) with
$\alpha=\gamma=-\lambda$ and $\delta=1$, we arrive at
\[
z(1-z)H''(z)
+\bigl[1-(1-2\lambda)z\bigr]H'(z)
-\lambda^2H(z)=0.
\]
Hence,
\[
Q(Q-1)\mathcal F_\lambda''(Q)
+(2Q-1)\mathcal F_\lambda'(Q)
=
\lambda(\lambda+1)\mathcal F_\lambda(Q).
\]
Substituting this into \eqref{eq:E-chain} yields
\[
(\p_t^2-\p_\sigma^2)\Ecal_\lambda(t,s,\sigma)
=
\f{\lambda(\lambda+1)}{t^2}
\Ecal_\lambda(t,s,\sigma),
\]
which proves \eqref{eq:kernel-t}.
From this, the symmetry of $\Ecal_\lambda(t,s,\sigma)$ with respect to $t$ and $s$
allows us to obtain \eqref{eq:kernel-s}.

On the other hand, for $|\sigma|=t-s$,  it follows from a direct computation that
\[
D(t,s,\pm(t-s))=4ts,\quad Q(t,s,\pm(t-s))=1,\quad z(t,s,\pm(t-s))=0.
\]
This, together with Lemma~\ref{lem:hypergeometric} (ii), yields
\begin{equation}\label{QQ-36''}
\Ecal_\lambda(t,s,\pm(t-s))=4^\lambda,
\quad
2c_\lambda\Ecal_\lambda(t,s,\pm(t-s))=1.
\end{equation}
\end{proof}

\subsection{Riemann representation formulae}

\begin{lemma}[{\bf Homogeneous problem with nontrivial initial data}]
\label{lem:homogeneous-representation}
For $\psi_0$ and $\psi_1$ defined in \eqref{eq:psi-equation},  the solution of
\begin{equation}
\label{eq:hom-linear}
\left\{
\begin{aligned}
&\p_t^2\psi-\p_r^2\psi-\f{\lambda(\lambda+1)}{t^2}\psi=0,
&&t\geq1,\quad r\in\R,\\
&\psi(1,r)=\ve\psi_0(r),\quad
\p_t\psi(1,r)=\ve\psi_1(r),
\end{aligned}
\right.
\end{equation}
is given by
\begin{equation}
\label{eq:hom-representation}
\begin{aligned}
\psi(t,r)={}&\f{\ve}{2}\psi_0(r+t-1)+\f{\ve}{2}\psi_0(r-t+1)
+\ve c_\lambda\int_{r-t+1}^{r+t-1}K_0(t,r-\rho)\psi_0(\rho)\dd\rho\\
&+\ve c_\lambda\int_{r-t+1}^{r+t-1}K_1(t,r-\rho)\psi_1(\rho)\dd\rho,
\end{aligned}
\end{equation}
where
\begin{equation}
\label{eq:K01}
K_1(t,\sigma)=\Ecal_\lambda(t,1,\sigma),\quad
K_0(t,\sigma)=-\left.\p_s\Ecal_\lambda(t,s,\sigma)\right|_{s=1},
\quad
c_\lambda=2^{-(2\lambda+1)}.
\end{equation}
\end{lemma}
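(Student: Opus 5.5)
The plan is the classical Riemann method for the operator $L=\p_t^2-\p_r^2-\lambda(\lambda+1)t^{-2}$, with $\Ecal_\lambda(t,s,r-\rho)$ as the Riemann function. Fix $(t,r)$ with $t>1$ and let $\Omega$ be the backward characteristic triangle in the $(s,\rho)$-plane with vertices $(t,r)$, $(1,r-t+1)$ and $(1,r+t-1)$. Set $v(s,\rho)=\Ecal_\lambda(t,s,r-\rho)$. By \eqref{eq:kernel-s} in Lemma~\ref{lem:kernel-equation}, $v$ satisfies the formal adjoint equation $(\p_s^2-\p_\rho^2-\lambda(\lambda+1)s^{-2})v=0$ in $\Omega$; the operator is formally self-adjoint, since its zeroth-order coefficient depends only on $s$. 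By \eqref{eq:kernel-characteristic}, $v\equiv4^\lambda=(2c_\lambda)^{-1}$ on the two characteristic sides $\rho=r\pm(t-s)$.

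Next apply Green's identity
\[
vL\psi-\psi L^*v=\p_s(v\p_s\psi-\psi\p_sv)-\p_\rho(v\p_\rho\psi-\psi\p_\rho v)
\]
and integrate over $\Omega$. Since $L\psi=0$ and $L^*v=0$, the divergence theorem leaves only boundary integrals, which are handled one side at a time.

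On the base $s=1$, with outward normal pointing downward, we get
\[
-\int_{r-t+1}^{r+t-1}\bigl[\Ecal_\lambda(t,1,r-\rho)\,\ve\psi_1(\rho)-\ve\psi_0(\rho)\,\p_s\Ecal_\lambda(t,s,r-\rho)|_{s=1}\bigr]\dd\rho,
\]
which produces the $K_1$ and $K_0$ terms via \eqref{eq:K01}.

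On each characteristic side the integrand is a tangential derivative, because $v$ is constant there. For instance, on $\rho=r-(t-s)$ the combination reduces to $v(\p_s+\p_\rho)\psi-\psi(\p_s+\p_\rho)v$, and the second term vanishes. Hence each side contributes $4^\lambda$ times a difference of endpoint values of $\psi$, giving $4^\lambda[2\psi(t,r)-\ve\psi_0(r+t-1)-\ve\psi_0(r-t+1)]$ with appropriate signs. Multiplying through by $c_\lambda$ and using $2c_\lambda4^\lambda=1$ then yields \eqref{eq:hom-representation}.

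The main technical obstacle is regularity of $\Ecal_\lambda$ up to the characteristics and at the vertex. We need $z=1-Q^{-1}\in[0,1)$ in the closed triangle with $t\ge s\ge1$, so that the hypergeometric series converges, and $\Ecal_\lambda$ must be smooth up to the boundary so that Green's formula applies. I would check this directly: $Q\ge1$ there, and $Q$ stays bounded on the compact triangle, so $z$ stays away from $1$ and the series defines a real-analytic function.

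Alternatively, one can avoid the boundary bookkeeping by verifying the formula directly. One checks that the right-hand side satisfies the equation, differentiating under the integral sign and using \eqref{eq:kernel-t} together with the boundary values \eqref{eq:kernel-characteristic} for the terms produced at the variable endpoints. One then checks the initial data at $t=1$, where the integrals vanish and the $t$-derivative picks up $\ve\psi_1$ from the endpoint terms. Uniqueness of smooth solutions of this linear hyperbolic problem, by a standard energy estimate, then identifies the right-hand side with $\psi$. I would keep this direct verification as a fallback check of the signs.
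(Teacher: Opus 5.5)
Your proposal is correct and follows the paper's proof essentially verbatim: Riemann's method with $\Ecal_\lambda(t,s,r-\rho)$ as Riemann function (the paper normalizes it as $\Rcal_\lambda=2c_\lambda\Ecal_\lambda$), Green's identity over the backward characteristic triangle, the base $s=1$ producing the $K_0,K_1$ terms, and the constancy $\Ecal_\lambda=4^\lambda$ on the two characteristic sides reducing those contributions to $\psi(t,r)-\ve\psi_0(r\pm(t-1))$. Your remark that $\Ecal_\lambda$ is smooth up to the boundary of the closed triangle (where $1\le Q\le (t+1)^2/(4t)$, so $z$ stays away from $1$) makes explicit a point the paper leaves implicit, and the direct-verification fallback is not needed.
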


\begin{proof}
For $t>1$ and $r\in\R$, set
\begin{equation}\label{QQ-5'}
\Omega_{t,r}
=
\bigl\{(s,\rho):1<s<t,\
r-(t-s)<\rho<r+(t-s)\bigr\}
\end{equation}
and define
\begin{equation}\label{QQ-5}
\Rcal_\lambda(t,r;s,\rho)
=
2c_\lambda\Ecal_\lambda(t,s,r-\rho).
\end{equation}
For $(s,\rho)\in\Omega_{t,r}$, let $\sigma=r-\rho$. Then
\[
|\sigma|<t-s,
\quad
\p_\rho^2\Ecal_\lambda(t,s,r-\rho)
=
\p_\sigma^2\Ecal_\lambda(t,s,\sigma).
\]
Hence, it follows from Lemma~\ref{lem:kernel-equation} that
\begin{equation}\label{QQ-4}
\begin{aligned}
\big(
\p_s^2-\p_\rho^2-\f{\lambda(\lambda+1)}{s^2}
\big)
\Rcal_\lambda(t,r;s,\rho)
=0.
\end{aligned}
\end{equation}
Moreover, by \eqref{eq:kernel-characteristic}, one has
\begin{equation}\label{QQ-1}
\Rcal_\lambda=1
\quad\text{for}\quad \rho=r\pm(t-s).
\end{equation}
Then
\begin{equation}\label{QQ-2}
\begin{aligned}
(\p_s-\p_\rho)\Rcal_\lambda&=0
&&\text{for}\quad  \rho=r+(t-s),\\
(\p_s+\p_\rho)\Rcal_\lambda&=0
&&\text{for}\quad  \rho=r-(t-s).
\end{aligned}
\end{equation}
Multiplying \eqref{eq:hom-linear} by $\Rcal_\lambda$ and \eqref{QQ-4} by $\psi$, it follows that for $(s,\rho)\in\Omega_{t,r}$,
\begin{equation}\label{QQ-3}
\p_s\left(
\Rcal_\lambda\p_s\psi-\psi\p_s\Rcal_\lambda
\right)
-
\p_\rho\left(
\Rcal_\lambda\p_\rho\psi-\psi\p_\rho\Rcal_\lambda
\right)
=0.
\end{equation}
Set
\[
A(s,\rho)=\Rcal_\lambda\p_s\psi-\psi\p_s\Rcal_\lambda,
\quad
B(s,\rho)=\Rcal_\lambda\p_\rho\psi-\psi\p_\rho\Rcal_\lambda.
\]
Integrating \eqref{QQ-3} over $\Omega_{t,r}$ and applying
\[
\f{\dd}{\dd s}\int_{r-t+s}^{r+t-s}A(s,\rho)\dd\rho
=
\int_{r-t+s}^{r+t-s}\p_sA(s,\rho)\dd\rho
-A(s,r+t-s)-A(s,r-t+s)
\]
and
\[
\int_{r-t+s}^{r+t-s}\p_\rho B(s,\rho)\dd\rho
=
B(s,r+t-s)-B(s,r-t+s),
\]
we arrive at
\begin{align}
0={}&
-\int_{r-t+1}^{r+t-1}
\left[
\Rcal_\lambda\p_s\psi-\psi\p_s\Rcal_\lambda
\right]_{s=1}\dd\rho\notag\\
&+\int_1^t
\left[
\Rcal_\lambda(\p_s\psi-\p_\rho\psi)
-\psi(\p_s-\p_\rho)\Rcal_\lambda
\right]_{\rho=r+(t-s)}\dd s\notag\\
&+\int_1^t
\left[
\Rcal_\lambda(\p_s\psi+\p_\rho\psi)
-\psi(\p_s+\p_\rho)\Rcal_\lambda
\right]_{\rho=r-(t-s)}\dd s.
\label{eq:Riemann-boundary}
\end{align}
In addition, \eqref{QQ-1} and \eqref{QQ-2} imply
\begin{equation}\label{QQ-6}
\begin{aligned}
\int_1^t
\left[
\Rcal_\lambda(\p_s\psi-\p_\rho\psi)
-\psi(\p_s-\p_\rho)\Rcal_\lambda
\right]_{\rho=r+(t-s)}\dd s&
=\int_1^t(\p_s\psi-\p_\rho\psi)(s,r+t-s)\dd s\\
&=\int_1^t\f{\dd}{\dd s}\psi(s,r+t-s)\dd s\\
&=\psi(t,r)-\ve\psi_0(r+t-1).
\end{aligned}
\end{equation}
Similarly, it holds that
\begin{equation}\label{QQ-7}
\begin{aligned}
&\int_1^t
\left[
\Rcal_\lambda(\p_s\psi+\p_\rho\psi)
-\psi(\p_s+\p_\rho)\Rcal_\lambda
\right]_{\rho=r-(t-s)}\dd s
=\psi(t,r)-\ve\psi_0(r-t+1).
\end{aligned}
\end{equation}
By \eqref{eq:hom-linear}, \eqref{eq:K01} and \eqref{QQ-5}, we have
\begin{equation}\label{QQ-8}
\begin{aligned}
&\int_{r-t+1}^{r+t-1}
\left[
\Rcal_\lambda\p_s\psi-\psi\p_s\Rcal_\lambda
\right]_{s=1}\dd\rho=\ve
\int_{r-t+1}^{r+t-1}
\left[
\Rcal_\lambda(t,r;1,\rho)\psi_1(\rho)
-\psi_0(\rho)\p_s\Rcal_\lambda(t,r;1,\rho)
\right]\dd\rho\\
&=2\ve c_\lambda\int_{r-t+1}^{r+t-1}
K_1(t,r-\rho)\psi_1(\rho)\dd\rho
+2\ve c_\lambda\int_{r-t+1}^{r+t-1}
K_0(t,r-\rho)\psi_0(\rho)\dd\rho.
\end{aligned}
\end{equation}
Substituting \eqref{QQ-6}-\eqref{QQ-8} into \eqref{eq:Riemann-boundary} yields
\begin{align}
2\psi(t,r)
={}&\ve\psi_0(r+t-1)+\ve\psi_0(r-t+1)
+2\ve c_\lambda\int_{r-t+1}^{r+t-1}
K_1(t,r-\rho)\psi_1(\rho)\dd\rho\notag\\
&+2\ve c_\lambda\int_{r-t+1}^{r+t-1}
K_0(t,r-\rho)\psi_0(\rho)\dd\rho.
\label{eq:Riemann-hom}
\end{align}
Therefore, \eqref{eq:hom-representation} is shown.
\end{proof}
\begin{lemma}[{\bf Inhomogeneous problem with vanishing data}]
\label{lem:inhomogeneous-representation}
For $G\in C^1([1,\infty)\times\R)$, the solution of
\begin{equation}
\label{eq:inhom-linear}
\left\{
\begin{aligned}
&\p_t^2\psi-\p_r^2\psi-\f{\lambda(\lambda+1)}{t^2}\psi=G,
&&t\geq1,\quad r\in\R,\\
&\psi(1,r)=0,\quad \p_t\psi(1,r)=0
\end{aligned}
\right.
\end{equation}
is given by
\begin{equation}
\label{eq:inhom-representation}
\psi(t,r)
=c_\lambda\int_1^t
\int_{r-(t-s)}^{r+(t-s)}
\Ecal_\lambda(t,s,r-\rho)G(s,\rho)\dd\rho\dd s.
\end{equation}
\end{lemma}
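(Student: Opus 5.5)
The plan is to repeat the Riemann-function argument from the proof of Lemma~\ref{lem:homogeneous-representation}, now keeping the source term $G$. Fix $t>1$ and $r\in\R$, and take $\Omega_{t,r}$ as in \eqref{QQ-5'} and $\Rcal_\lambda(t,r;s,\rho)=2c_\lambda\Ecal_\lambda(t,s,r-\rho)$ as in \eqref{QQ-5}. By Lemma~\ref{lem:kernel-equation}, $\Rcal_\lambda$ satisfies the adjoint equation \eqref{QQ-4} in $\Omega_{t,r}$. It also satisfies the characteristic conditions \eqref{QQ-1}--\eqref{QQ-2}: $\Rcal_\lambda\equiv1$ on the two sides $\rho=r\pm(t-s)$, so its tangential derivatives vanish there.

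First, multiply \eqref{eq:inhom-linear} by $\Rcal_\lambda$ and \eqref{QQ-4} by $\psi$, and subtract. In place of \eqref{QQ-3} this gives
\[
\p_s\bigl(\Rcal_\lambda\p_s\psi-\psi\p_s\Rcal_\lambda\bigr)-\p_\rho\bigl(\Rcal_\lambda\p_\rho\psi-\psi\p_\rho\Rcal_\lambda\bigr)=\Rcal_\lambda G .
\]
Integrate this identity over $\Omega_{t,r}$, using the same differentiation-under-the-integral and fundamental-theorem formulas as before. The boundary terms are exactly those in \eqref{eq:Riemann-boundary}, and the left-hand side $0$ is replaced by $\iint_{\Omega_{t,r}}\Rcal_\lambda G\dd\rho\dd s$.

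Next, evaluate the three boundary pieces.
\begin{enumerate}[(i)]
\item On the two characteristic sides, the computations \eqref{QQ-6}--\eqref{QQ-7} apply verbatim. Since $\psi(1,\cdot)=0$, they contribute $\psi(t,r)$ each.
\item On the bottom $s=1$, both $\psi$ and $\p_s\psi$ vanish, so that integral is zero.
\end{enumerate}
Hence
\[
2\psi(t,r)=\iint_{\Omega_{t,r}}\Rcal_\lambda G\dd\rho\dd s=2c_\lambda\int_1^t\int_{r-(t-s)}^{r+(t-s)}\Ecal_\lambda(t,s,r-\rho)G(s,\rho)\dd\rho\dd s ,
\]
which is \eqref{eq:inhom-representation}. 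The sign needs checking against the orientation in \eqref{eq:Riemann-boundary}: there the side terms enter with a plus sign, and the source term moves to the other side with the correct sign.

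The main obstacle is justifying the identity, not the algebra. The argument above presupposes a $C^2$ solution $\psi$ and enough regularity of $\Ecal_\lambda$ up to the boundary of $\Omega_{t,r}$. $\Ecal_\lambda$ is smooth for $z\in[0,1)$, and $z<1$ holds whenever $s\geq1$, so it is smooth on the closed triangle and this part is fine.

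For existence, with $G\in C^1$ I would define $\psi$ by the formula and verify directly that it solves \eqref{eq:inhom-linear}, in the style of Duhamel's principle.
\begin{enumerate}[(i)]
\item Differentiate in $t$ once. The boundary terms give $c_\lambda\Ecal_\lambda(t,t,\pm0)$-type contributions, which vanish because the interval length is $0$ at $s=t$. This leaves $\p_t\psi=c_\lambda\int_1^t\bigl[\Ecal_\lambda(t,s,t-s)G(s,r-t+s)+\Ecal_\lambda(t,s,-(t-s))G(s,r+t-s)\bigr]\dd s+c_\lambda\iint\p_t\Ecal_\lambda\,G$.
\item Differentiate a second time, and use \eqref{eq:kernel-characteristic}, i.e.\ $2c_\lambda\Ecal_\lambda=1$ on the characteristics, together with \eqref{eq:kernel-t}. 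This produces $G(t,r)$ plus the operator terms, which cancel.
\item The vanishing of $\psi$ and $\p_t\psi$ at $t=1$ is immediate from the formula.
\end{enumerate}
Uniqueness of the linear problem, which follows from a standard energy estimate since the coefficient $\lambda(\lambda+1)t^{-2}$ is bounded on $[1,\infty)$, identifies this $\psi$ with the solution.
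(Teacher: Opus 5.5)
Your proposal is correct and follows the paper's proof. You integrate $\p_s(\Rcal_\lambda\p_s\psi-\psi\p_s\Rcal_\lambda)-\p_\rho(\Rcal_\lambda\p_\rho\psi-\psi\p_\rho\Rcal_\lambda)=\Rcal_\lambda G$ over $\Omega_{t,r}$; the zero data kill the bottom term, and each characteristic side contributes $\psi(t,r)$, giving $2\psi(t,r)=\int\!\!\int_{\Omega_{t,r}}\Rcal_\lambda G$ with the sign you anticipated. Your Duhamel-type existence check goes beyond the paper, which only derives the formula for an assumed $C^2$ solution. The energy-estimate uniqueness step is unnecessary, because the Riemann identity already forces every $C^2$ solution to equal the formula.
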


\begin{proof}
Let $\Omega_{t,r}$ and $\Rcal_\lambda$ be given by
\eqref{QQ-5'} and \eqref{QQ-5}, respectively. Multiplying
\eqref{eq:inhom-linear} by $\Rcal_\lambda$ and \eqref{QQ-4} by $\psi$
yields that for $(s,\rho)\in\Omega_{t,r}$,
\begin{equation}
\label{QQ-9}
\Rcal_\lambda G
=\p_s\left(
\Rcal_\lambda\p_s\psi-\psi\p_s\Rcal_\lambda
\right)
-\p_\rho\left(
\Rcal_\lambda\p_\rho\psi-\psi\p_\rho\Rcal_\lambda
\right).
\end{equation}
Analogously to the treatment of \eqref{QQ-3},
we then have
\begin{align}
&\int_1^t\int_{r-(t-s)}^{r+(t-s)}
\Rcal_\lambda(t,r;s,\rho)G(s,\rho)\dd\rho\dd s\notag\\
&=-\int_{r-t+1}^{r+t-1}
\left[
\Rcal_\lambda\p_s\psi-\psi\p_s\Rcal_\lambda
\right]_{s=1}\dd\rho\notag+\int_1^t
\left[
\Rcal_\lambda(\p_s\psi-\p_\rho\psi)
-\psi(\p_s-\p_\rho)\Rcal_\lambda
\right]_{\rho=r+(t-s)}\dd s\notag\\
&\quad+\int_1^t
\left[
\Rcal_\lambda(\p_s\psi+\p_\rho\psi)
-\psi(\p_s+\p_\rho)\Rcal_\lambda
\right]_{\rho=r-(t-s)}\dd s.
\label{QQ-10}
\end{align}
By the vanishing initial data  in \eqref{eq:inhom-linear}, one has
\begin{equation}
\label{QQ-11}
\int_{r-t+1}^{r+t-1}
\left[
\Rcal_\lambda\p_s\psi-\psi\p_s\Rcal_\lambda
\right]_{s=1}\dd\rho
=0.
\end{equation}
Moreover, it follows from \eqref{QQ-6} and \eqref{QQ-7} that
\begin{equation}
\label{QQ-12}
\begin{aligned}
&\int_1^t
\left[
\Rcal_\lambda(\p_s\psi-\p_\rho\psi)
-\psi(\p_s-\p_\rho)\Rcal_\lambda
\right]_{\rho=r+(t-s)}\dd s\\
&\quad +\int_1^t
\left[
\Rcal_\lambda(\p_s\psi+\p_\rho\psi)
-\psi(\p_s+\p_\rho)\Rcal_\lambda
\right]_{\rho=r-(t-s)}\dd s\\
&=2\psi(t,r).
\end{aligned}
\end{equation}
Combining \eqref{QQ-11}-\eqref{QQ-12} and \eqref{QQ-10} implies
\begin{equation}
\label{QQ-14}
2\psi(t,r)
=\int_1^t\int_{r-(t-s)}^{r+(t-s)}\Rcal_\lambda(t,r;s,\rho)G(s,\rho)\dd\rho\dd s,
\end{equation}
which proves
\eqref{eq:inhom-representation}.
\end{proof}
Collecting Lemmas~\ref{lem:homogeneous-representation} and
\ref{lem:inhomogeneous-representation} with the superposition principle,
we have the following representation formula.

\begin{proposition}[{\bf Representation formula}]
\label{prop:full-representation}
For $\psi_0$ and $\psi_1$ defined in \eqref{eq:psi-equation} and
$G\in C^1([1,\infty)\times\R)$, the solution of
\begin{equation}
\label{eq:full-linear}
\left\{
\begin{aligned}
&\p_t^2\psi-\p_r^2\psi-\f{\lambda(\lambda+1)}{t^2}\psi=G,
&&t\geq1,\quad r\in\R,\\
&(\psi,\p_t\psi)(1,r)=\ve(\psi_0,\psi_1)(r),
\end{aligned}
\right.
\end{equation}
is given by
\begin{equation}
\label{eq:full-representation}
\begin{aligned}
\psi(t,r)
={}&\f{\ve}{2}\psi_0(r+t-1)
+\f{\ve}{2}\psi_0(r-t+1)
+\ve c_\lambda\int_{r-t+1}^{r+t-1}
K_0(t,r-\rho)\psi_0(\rho)\dd\rho\\
&+\ve c_\lambda\int_{r-t+1}^{r+t-1}
K_1(t,r-\rho)\psi_1(\rho)\dd\rho
+c_\lambda\int_1^t\int_{r-(t-s)}^{r+(t-s)}
\Ecal_\lambda(t,s,r-\rho)G(s,\rho)\dd\rho\dd s.
\end{aligned}
\end{equation}
\end{proposition}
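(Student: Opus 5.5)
The plan is to reduce Proposition~\ref{prop:full-representation} to the two preceding lemmas by linearity, and then add a short uniqueness step so that the formula describes \emph{the} solution of \eqref{eq:full-linear}.

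\emph{Decomposition.} Let $\psi$ solve \eqref{eq:full-linear}. Let $\psi^{(1)}$ solve the homogeneous problem \eqref{eq:hom-linear} with data $\ve(\psi_0,\psi_1)$, and let $\psi^{(2)}$ solve \eqref{eq:inhom-linear} with source $G$ and vanishing data. The operator $\p_t^2-\p_r^2-\lambda(\lambda+1)t^{-2}$ is linear, and its coefficients are smooth on $t\ge1$. Hence $\psi^{(1)}+\psi^{(2)}$ satisfies the equation with right-hand side $G$ and has initial data $\ve(\psi_0,\psi_1)$.

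\emph{Representation of each piece.} Lemma~\ref{lem:homogeneous-representation} gives $\psi^{(1)}$ as the first four terms of \eqref{eq:full-representation}. Lemma~\ref{lem:inhomogeneous-representation} gives $\psi^{(2)}$ as the last term. Adding the two yields the right-hand side of \eqref{eq:full-representation}.

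\emph{Identification with $\psi$.} It remains to show $\psi=\psi^{(1)}+\psi^{(2)}$, that is, uniqueness for the linear problem. The difference $v=\psi-\psi^{(1)}-\psi^{(2)}$ solves the homogeneous equation with zero data. There are two ways to conclude.
\begin{enumerate}[(i)]
\item Apply the Riemann identity of Lemma~\ref{lem:homogeneous-representation} directly to $v$. That argument only integrates \eqref{QQ-3} over the triangle $\Omega_{t,r}$ and uses \eqref{QQ-1}--\eqref{QQ-2}, so it applies to any $C^2$ solution. It yields $2v(t,r)=0$.
\item Use an energy estimate. Set $E(t)=\int_{|\rho-r|<T-t}\bigl(|\p_tv|^2+|\p_\rho v|^2+|v|^2\bigr)\dd\rho$. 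Then $E'(t)\le C(1+\lambda(\lambda+1))E(t)$, because $t^{-2}\le1$ for $t\ge1$. Gronwall's inequality then gives $E\equiv0$.
\end{enumerate}
Either route shows that $v$ vanishes.

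\emph{Main obstacle: regularity at the characteristic corners.} The one delicate point is justifying that the two lemmas' derivations apply at the regularity actually assumed. In Lemma~\ref{lem:inhomogeneous-representation} the $C^2$ regularity of $\psi^{(2)}$ must be obtained from $G\in C^1$. I would handle this by differentiating the Duhamel formula \eqref{eq:inhom-representation} directly. The kernel is smooth: $\Ecal_\lambda$ depends analytically on $z\in[0,1)$ inside the cone, and the boundary values \eqref{eq:kernel-characteristic} make the boundary terms combine correctly. So one can verify by direct differentiation that the formula solves \eqref{eq:inhom-linear}, using \eqref{eq:kernel-t} and $2c_\lambda\Ecal_\lambda=1$ on $|\sigma|=t-s$.

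Differentiating twice in $t$ then produces three kinds of terms:
\begin{itemize}
\item $G(t,r)$, arising from $2c_\lambda\Ecal_\lambda|_{s=t}=1$;
\item boundary terms $c_\lambda\,\p\Ecal_\lambda$ on the edges, which cancel against the corresponding terms from $\p_r^2$;
\item the interior integral, which reproduces $\lambda(\lambda+1)t^{-2}\psi$ by \eqref{eq:kernel-t}.
\end{itemize}
This completes the proof.
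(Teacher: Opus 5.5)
Your proposal is correct and follows the paper's route: the paper obtains this proposition simply by superposing Lemma~\ref{lem:homogeneous-representation} and Lemma~\ref{lem:inhomogeneous-representation}. Your extra steps are sound and only make explicit what the paper leaves implicit, since the Riemann-identity derivation in those lemmas already applies to any $C^2$ solution. These extra steps are uniqueness via the Riemann identity or an energy estimate for the difference, and verifying the Duhamel term by direct differentiation using \eqref{eq:kernel-t} and $2c_\lambda\Ecal_\lambda=1$ on the characteristic edges.
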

\begin{remark}
It is pointed out that Proposition~\ref{prop:full-representation} is closely related to
\cite[Theorem~1]{P2}. Indeed, setting $\tau=t-1$, $b=s-1$, $x=r$,
$y=\rho$, $\mu_1=0$ and $\nu^2=-\lambda(\lambda+1)$ in
\cite[Theorem~1]{P2} with $\delta=(2\lambda+1)^2$ formally yields
the kernel $\Ecal_\lambda$ and the representation formula
\eqref{eq:full-representation}.
However, \cite[Theorem~1]{P2} is established under the assumption of
$\nu^2\geq0$, whereas in the present problem
$\nu^2=-\lambda(\lambda+1)<0$. Therefore, it is necessary to derive
the corresponding representation formula for \eqref{eq:full-linear} explicitly.
\end{remark}

\section{Weighted  estimates of solutions for the homogeneous problem}
\label{sec:linear-estimates}
In this section, we establish some weighted spacetime pointwise estimates for the
solution $\psi_{\rm lin}$ of homogeneous  problem \eqref{eq:hom-linear}. Writing
$w_{\rm lin}(t,r)=\f{\psi_{\rm lin}(t,r)}{r}$ and
\[
\Acal(t,r)=1+t+r,\quad \Bcal(t,r)=1+t-r,
\]
where 
\begin{equation*}
\begin{aligned}
\psi_{\rm lin}(t,r)=&\f{\ve}{2}\psi_0(r+t-1)
+\f{\ve}{2}\psi_0(r-t+1)
+\ve c_\lambda\int_{r-t+1}^{r+t-1}
K_0(t,r-\rho)\psi_0(\rho)\dd\rho\\
&+\ve c_\lambda\int_{r-t+1}^{r+t-1}
K_1(t,r-\rho)\psi_1(\rho)\dd\rho
\end{aligned}
\end{equation*} 
which comes from the expression \eqref{eq:full-representation}.
We next prove that for $1<\kappa<2-\lambda$,
\begin{equation}\label{YHC-05}
|w_{\rm lin}(t,r)|
+\f{1}{1+r}|\p_r(rw_{\rm lin})(t,r)|
\lesssim
\ve\Acal(t,r)^{-1}\Bcal(t,r)^{1-\kappa},
\quad t\geq1,\quad 0\leq r\leq t.
\end{equation}

To this end, we first give two technical lemmas concerning  the
Gauss hypergeometric function and the kernels appeared in
\eqref{eq:hom-representation}.

\begin{lemma}
\label{lem:Phi}
Let $0<\lambda<1/2$ and
\begin{equation}\label{QQ-19'}
\mathcal F_\lambda(Q)
=
Q^\lambda
{}_2F_1\left(
-\lambda,-\lambda;1;1-Q^{-1}
\right),
\quad Q\geq1.
\end{equation}
Then, for $j=0,1,2$,
\begin{equation}\label{QQ-19}
\big|
\mathcal F_\lambda^{(j)}(Q)
\big|
\leq
C_\lambda Q^{\lambda-j},
\quad Q\geq1.
\end{equation}
\end{lemma}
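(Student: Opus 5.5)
With $H(z)={}_2F_1(-\lambda,-\lambda;1;z)$ and $z=1-Q^{-1}\in[0,1)$, I write $\mathcal F_\lambda(Q)=Q^\lambda H(1-Q^{-1})$. The whole proof then reduces to bounds on $H$, $H'$, $H''$ on $[0,1)$, with the appropriate behaviour as $z\to1^-$. Differentiating in $Q$ with $dz/dQ=Q^{-2}$ gives
\[
\mathcal F_\lambda'(Q)=\lambda Q^{\lambda-1}H(z)+Q^{\lambda-2}H'(z),
\]
\[
\mathcal F_\lambda''(Q)=\lambda(\lambda-1)Q^{\lambda-2}H(z)+2\lambda Q^{\lambda-3}H'(z)-2Q^{\lambda-3}H'(z)+Q^{\lambda-4}H''(z).
\]
Since $Q\ge1$, the estimate \eqref{QQ-19} follows once I show three bounds:
\[
|H(z)|\le C,\qquad |H'(z)|\le C(1-z)^{-1}=CQ,\qquad |H''(z)|\le C(1-z)^{-2}=CQ^2
\]
for $z\in[0,1)$. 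With these, every term in $\mathcal F_\lambda^{(j)}$ is $O(Q^{\lambda-j})$.

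\textbf{Bound on $H$.} For this I use the series. The coefficients are $(-\lambda)_k^2/(k!)^2$, and for $k\ge1$ one has $(-\lambda)_k=-\lambda(1-\lambda)_{k-1}$. So
\[
\frac{|(-\lambda)_k|}{k!}=\frac{\lambda}{k}\cdot\frac{\Gamma(k-\lambda)}{\Gamma(1-\lambda)\Gamma(k)}\sim C k^{-1-\lambda}.
\]
The coefficients of $H$ are therefore $O(k^{-2-2\lambda})$, which is summable. Hence $H$ extends continuously to $[0,1]$ and is bounded there; this is also Gauss's theorem, since $\delta-\alpha-\gamma=1+2\lambda>0$.

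\textbf{Bounds on $H'$ and $H''$.} Here I would use $H'(z)=\lambda^2\,{}_2F_1(1-\lambda,1-\lambda;2;z)$. For this function, $\delta-\alpha-\gamma=2\lambda>0$, so it is again bounded on $[0,1)$. Its coefficients behave like $k^{-1-2\lambda}$, which is summable because $\lambda>0$. This gives the stronger bound $|H'|\le C$.

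Next, $H''=\lambda^2\frac{(1-\lambda)^2}{2}\,{}_2F_1(2-\lambda,2-\lambda;3;z)$. Here $\delta-\alpha-\gamma=2\lambda-1\in(-1,0)$ because $\lambda<1/2$. By Euler's transformation,
\[
{}_2F_1(2-\lambda,2-\lambda;3;z)=(1-z)^{2\lambda-1}{}_2F_1(1+\lambda,1+\lambda;3;z),
\]
and the last factor is bounded on $[0,1]$ because $3-2-2\lambda=1-2\lambda>0$. Thus $|H''|\le C(1-z)^{2\lambda-1}\le C(1-z)^{-1}=CQ$, which is better than the $Q^2$ needed. (Alternatively, I could avoid Euler's transformation: the coefficients of $H''$ behave like $k^{-2\lambda}$, and $\sum k^{-2\lambda}z^k\lesssim(1-z)^{2\lambda-1}$.)

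Substituting these bounds into the formulas for $\mathcal F_\lambda'$ and $\mathcal F_\lambda''$ finishes the proof.

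The main obstacle is the behaviour of $H''$ near $z=1$, i.e. $Q\to\infty$. This is the only place where the restriction $\lambda<1/2$ matters, and it is handled either by Euler's transformation or by the coefficient asymptotics $\Gamma(k+a)/\Gamma(k+b)\sim k^{a-b}$.
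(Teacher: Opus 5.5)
Your proof is correct, but it takes a different route from the paper's.

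The paper splits the range into two parts. On $1\le Q\le 2$ it uses smoothness of ${}_2F_1$ for $z\in[0,1/2]$. On $Q\ge 2$ it invokes the connection formula at $z=1$, ${}_2F_1(-\lambda,-\lambda;1;1-y)=C_0(\lambda)\,{}_2F_1(-\lambda,-\lambda;-2\lambda;y)+C_1(\lambda)\,y^{1+2\lambda}\,{}_2F_1(1+\lambda,1+\lambda;2+2\lambda;y)$. This gives $\mathcal F_\lambda(Q)=C_0Q^\lambda A_\lambda(Q^{-1})+C_1Q^{-1-\lambda}B_\lambda(Q^{-1})$ with $A_\lambda,B_\lambda$ analytic near $0$, and the paper then differentiates each piece in $Q$.

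You instead stay in the variable $z$ and apply the chain rule once. This reduces everything to $|H|\lesssim 1$, $|H'|\lesssim(1-z)^{-1}$ and $|H''|\lesssim(1-z)^{-2}$ on $[0,1)$. You get these from the derivative formula for ${}_2F_1$, together with nonnegativity and Stirling asymptotics of the coefficients, or alternatively Gauss's theorem plus Euler's transformation. I checked your formulas for $\mathcal F_\lambda'$ and $\mathcal F_\lambda''$, the parameter shifts $(1-\lambda,1-\lambda;2)$ and $(2-\lambda,2-\lambda;3)$, and the excesses $1+2\lambda$, $2\lambda$, $2\lambda-1$; all are right.

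What your route buys:
\begin{itemize}
\item It is uniform on all of $[1,\infty)$, with no case split and no connection coefficients.
\item It gives sharper information than needed: $H'$ is bounded and $|H''|\lesssim(1-z)^{2\lambda-1}$.
\item It is less sensitive to $\lambda$. In your argument $\lambda<1/2$ only produces the sharper exponent $2\lambda-1$. The bound $(1-z)^{-2}$ you actually need would survive, for instance, at $\lambda=1/2$, where $H''$ grows only logarithmically. The paper's connection formula instead requires $1+2\lambda\notin\mathbb Z$ and degenerates there.
\end{itemize}

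What the paper's route buys in exchange is explicit asymptotic structure. It identifies the exact leading term $C_0(\lambda)Q^\lambda$ with an analytic expansion in $Q^{-1}$, and it controls derivatives of every order at once.
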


\begin{proof}
We consider separately the regions $1\leq Q\leq2$ and $Q\geq2$.

For $1\leq Q\leq2$, one has
\[
0\leq1-Q^{-1}\leq\f12.
\]
Since the Gauss hypergeometric function ${}_2F_1(-\lambda,-\lambda;1;z)$ is analytic for $|z|<1$ (see \cite[Section~15.2]{OLBC}),
$\mathcal F_\lambda$ is smooth on $[1,2]$. Hence, for $j=0,1,2$,
\begin{equation}\label{QQ-15}
\big|
\mathcal F_\lambda^{(j)}(Q)
\big|
\leq
C_\lambda Q^{\lambda-j},
\quad 1\leq Q\leq2.
\end{equation}

We next assume $Q\geq2$ and set $y=Q^{-1}$, so that
$0<y\leq1/2$. It follows from
\cite[Section ~15.8]{OLBC} that
\[
\begin{aligned}
{}_2F_1(-\lambda,-\lambda;1;1-y)
={}&
C_{0}(\lambda)
{}_2F_1(-\lambda,-\lambda;-2\lambda;y)+
C_{1}(\lambda)
y^{1+2\lambda}
{}_2F_1(1+\lambda,1+\lambda;2+2\lambda;y),
\end{aligned}
\]
where $C_0(\lambda)$ and $C_1(\lambda)$ are some constants depending
on $\lambda$.
Due to $0<\lambda<1/2$, all the hypergeometric functions
above are well defined.

Set
\[
A_\lambda(y)={}_2F_1(-\lambda,-\lambda;-2\lambda;y),
\qquad
B_\lambda(y)={}_2F_1(1+\lambda,1+\lambda;2+2\lambda;y).
\]
Both functions are analytic for $|y|<1$. Therefore,
\begin{equation}\label{QQ-16}
\mathcal F_\lambda(Q)
=
C_{0}(\lambda)Q^\lambda A_\lambda(Q^{-1})
+
C_{1}(\lambda)Q^{-1-\lambda}B_\lambda(Q^{-1})=C_{0}(\lambda)\mathcal F_{\lambda,0}(Q)+C_{1}(\lambda)\mathcal F_{\lambda,1}(Q),
\end{equation}
where
\[
\sup_{0\leq y\leq1/2}
\big(
|A_\lambda^{(m)}(y)|
+
|B_\lambda^{(m)}(y)|
\big)
\leq C_\lambda,
\quad m=0,1,2.
\]
A direct differentiation gives
\[
\mathcal F_{\lambda,0}'(Q)
=\lambda Q^{\lambda-1}A_\lambda(Q^{-1})
-Q^{\lambda-2}A_\lambda'(Q^{-1}),
\]
and
\[
\begin{aligned}
\mathcal F_{\lambda,0}''(Q)
={}&\lambda(\lambda-1)Q^{\lambda-2}A_\lambda(Q^{-1})
+2(1-\lambda)Q^{\lambda-3}A_\lambda'(Q^{-1})+
Q^{\lambda-4}A_\lambda''(Q^{-1}).
\end{aligned}
\]
Then
\begin{equation}\label{QQ-17}
\big|
\mathcal F_{\lambda,0}^{(j)}(Q)
\big|
\leq
C_\lambda Q^{\lambda-j},
\quad j=0,1,2.
\end{equation}
Similarly, for $Q\geq2$,
\begin{equation}\label{QQ-18}
\big|
\mathcal F_{\lambda,1}^{(j)}(Q)
\big|
\leq
C_\lambda Q^{\lambda-j},
\quad j=0,1,2.
\end{equation}
Hence, \eqref{QQ-19} follows  from \eqref{QQ-15} and \eqref{QQ-17}-\eqref{QQ-18} directly.
\end{proof}

\begin{lemma}
\label{lem:kernel-pointwise}
Let
\begin{equation}\label{QQ-21'}
D(t,s,y)=(t+s)^2-y^2,
\quad
Q(t,s,y)=\f{D(t,s,y)}{4ts}.
\end{equation}
For $1\leq s\leq t$ and $|y|\leq t-s$, one has
\begin{equation}\label{QQ-20}
|\Ecal_\lambda(t,s,y)|
\lesssim Q(t,s,y)^\lambda,
\quad
|\p_y\Ecal_\lambda(t,s,y)|
\lesssim
Q(t,s,y)^\lambda\f{|y|}{D(t,s,y)}.
\end{equation}
Moreover, for $K_0$ and $K_1$ defined in \eqref{eq:K01},
$|y|\leq t-1$ and $j=0,1$,
\begin{equation}\label{QQ-21}
|K_j(t,y)|
\lesssim Q(t,1,y)^\lambda,
\quad
|\p_yK_j(t,y)|
\lesssim
Q(t,1,y)^\lambda\f{|y|}{D(t,1,y)}.
\end{equation}
\end{lemma}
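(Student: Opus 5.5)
The plan is to reduce everything to Lemma~\ref{lem:Phi} through the chain rule, using the identity $\Ecal_\lambda(t,s,y)=4^\lambda\mathcal F_\lambda(Q(t,s,y))$ from the proof of Lemma~\ref{lem:kernel-equation}. On the region $1\le s\le t$, $|y|\le t-s$ we have $Q\ge1$, so the case $j=0$ of \eqref{QQ-19} gives $|\Ecal_\lambda|\lesssim Q^\lambda$ at once. For the $y$-derivative, $\p_yQ=-\f{y}{2ts}$. Hence
\[
|\p_y\Ecal_\lambda|=4^\lambda|\mathcal F_\lambda'(Q)|\f{|y|}{2ts}\lesssim Q^{\lambda-1}\f{|y|}{ts}.
\]
Writing $Q^{-1}=\f{4ts}{D}$, this becomes $\lesssim Q^\lambda\f{|y|}{D}$, which proves \eqref{QQ-20}.

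For $K_1(t,y)=\Ecal_\lambda(t,1,y)$, the bounds in \eqref{QQ-21} follow by setting $s=1$ in \eqref{QQ-20}. For $K_0=-\p_s\Ecal_\lambda|_{s=1}$, first compute
\[
\p_sQ=\f{s^2-t^2+y^2}{4ts^2}
\]
(by symmetry with $\p_tQ$). Then
\[
K_0(t,y)=-4^\lambda\mathcal F_\lambda'(Q)\,\f{1-t^2+y^2}{4t}\Big|_{s=1}.
\]
On $|y|\le t-1$ we have $|1-t^2+y^2|\le t^2-1+y^2$, and $D(t,1,y)=(t+1)^2-y^2\ge (t+1)^2-(t-1)^2=4t$. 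The key step is to show that $|1-t^2+y^2|/(4t)\lesssim Q(t,1,y)=D/(4t)$, i.e. $|t^2-1-y^2|\lesssim (t+1)^2-y^2$. This holds because $t^2-1-y^2\le (t+1)^2-y^2$ directly, and if $y^2>t^2-1$ then $y^2-t^2+1\le (t-1)^2-t^2+1\le 2\le D/2$ since $D\ge4t\ge4$. Combined with $|\mathcal F_\lambda'|\lesssim Q^{\lambda-1}$, this gives $|K_0|\lesssim Q^\lambda$.

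For $\p_yK_0$, differentiate the expression above in $y$:
\[
\p_yK_0=-4^\lambda\Bigl[\mathcal F_\lambda''(Q)\,\p_yQ\,\f{1-t^2+y^2}{4t}+\mathcal F_\lambda'(Q)\f{2y}{4t}\Bigr].
\]
For the first term, use $|\mathcal F_\lambda''|\lesssim Q^{\lambda-2}$, $|\p_yQ|=|y|/(2t)=Q\cdot 2|y|/D$, and the bound $|1-t^2+y^2|/(4t)\lesssim Q$ just proved. This gives $\lesssim Q^{\lambda}|y|/D$. For the second term, $Q^{\lambda-1}|y|/t= 4Q^\lambda |y|/D$. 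Together these give \eqref{QQ-21}.

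The only real obstacle is the comparison $|t^2-1-y^2|\lesssim D(t,1,y)$, which is needed in both estimates for $K_0$. As shown above it is elementary, but it is where the restriction $|y|\le t-1$ (and hence $Q\ge1$) is used. Everything else is bookkeeping with the chain rule and \eqref{QQ-19}, applied at the orders $j=0,1,2$.
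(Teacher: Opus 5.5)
Your proof is correct and follows essentially the same route as the paper: you write $\Ecal_\lambda=4^\lambda\mathcal F_\lambda(Q)$, apply the chain rule with Lemma~\ref{lem:Phi}, and for $K_0$ reduce to $|\p_sQ(t,1,y)|\lesssim Q(t,1,y)$ together with $|\p_y\p_sQ(t,1,y)|=|y|/(2t)\lesssim Q\,|y|/D$. The only difference is that the paper gets the bound on $\p_sQ$ from the identity $\p_sQ=Q\bigl(\f{2(t+s)}{D}-\f1s\bigr)$, whereas you compare $|t^2-1-y^2|$ with $D$ directly; your second case there is actually vacuous, since $|y|\le t-1$ already forces $y^2\le t^2-1$.
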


\begin{proof}
Note that  for $1\leq s\leq t$ and $|y|\leq t-s$,
\[
D(t,s,y)\geq(t+s)^2-(t-s)^2=4ts.
\]
Then
\[
Q(t,s,y)=\f{D(t,s,y)}{4ts}\geq1.
\]
By the definition of
$\mathcal F_\lambda$ in \eqref{QQ-19'}, one arrives at
\begin{equation}\label{QQ-22}
\Ecal_\lambda(t,s,y)
=
4^\lambda\mathcal F_\lambda(Q(t,s,y)).
\end{equation}
Thus,  \eqref{QQ-19} with $j=0$ yields that
\begin{equation}\label{QQ-23'}
|\Ecal_\lambda(t,s,y)|
\lesssim
Q(t,s,y)^\lambda.
\end{equation}
Differentiating $Q$ with respect to $y$, we obtain
\begin{equation}\label{QQ-23}
\p_yQ(t,s,y)
=
-\f{y}{2ts}
=
-\f{2y}{D(t,s,y)}Q(t,s,y).
\end{equation}
It follows from \eqref{QQ-19}, \eqref{QQ-22} and \eqref{QQ-23} that
\[
\begin{aligned}
|\p_y\Ecal_\lambda(t,s,y)|
&=
4^\lambda
\left|
\mathcal F_\lambda'(Q(t,s,y))\p_yQ(t,s,y)
\right|
\lesssim
Q(t,s,y)^\lambda\f{|y|}{D(t,s,y)}.
\end{aligned}
\]
This, together with \eqref{QQ-23'}, yields \eqref{QQ-20}.

The estimates for $K_1$ in \eqref{QQ-21} follow immediately from
\eqref{QQ-20} by setting $s=1$. We now investigate $K_0$.
Differentiating $Q=D/(4ts)$ with respect to $s$ gives
\begin{equation}\label{QQ-24}
\p_sQ(t,s,y)
=Q(t,s,y)
\left(
\f{2(t+s)}{D(t,s,y)}-\f1s
\right).
\end{equation}
For $s=1$ and $|y|\leq t-1$,
\[
D(t,1,y)
\geq
(t+1)^2-(t-1)^2
=4t.
\]
Therefore,
\[
\left|
\f{\p_sQ(t,1,y)}{Q(t,1,y)}
\right|
\leq
\f{2(t+1)}{D(t,1,y)}+1
\leq2,
\]
and then
\begin{equation}\label{QQ-25}
|\p_sQ(t,1,y)|
\lesssim
Q(t,1,y).
\end{equation}
By \eqref{QQ-22} and \eqref{eq:K01}, we have
\begin{equation}\label{QQ-26}
K_0(t,y)
=-4^\lambda
\mathcal F_\lambda'(Q(t,1,y))\p_sQ(t,1,y).
\end{equation}
Combining \eqref{QQ-19} and \eqref{QQ-25}-\eqref{QQ-26} implies
\begin{equation}\label{QQ-27'}
|K_0(t,y)|
\lesssim
Q(t,1,y)^{\lambda-1}|\p_sQ(t,1,y)|
\lesssim
Q(t,1,y)^\lambda.
\end{equation}
We next turn to estimate $\p_yK_0$. From \eqref{QQ-23}, it holds that
\begin{equation}\label{QQ-27}
\p_{sy}^2Q(t,s,y)
=\f{y}{2ts^2},
\quad
|\p_{sy}^2Q(t,1,y)|
=2Q(t,1,y)\f{|y|}{D(t,1,y)}.
\end{equation}
Differentiating \eqref{QQ-26} with respect to $y$, one can arrive at
\begin{equation}\label{QQ-28}
\begin{aligned}
\p_yK_0(t,y)
=-4^\lambda\bigl[
&\mathcal F_\lambda''(Q(t,1,y))
\p_sQ(t,1,y)\p_yQ(t,1,y)+
\mathcal F_\lambda'(Q(t,1,y))
\p_{sy}^2Q(t,1,y)
\bigr].
\end{aligned}
\end{equation}
Thus, using \eqref{QQ-19}, \eqref{QQ-23}, \eqref{QQ-25} and
\eqref{QQ-27} yields
\[
\begin{aligned}
|\p_yK_0(t,y)|
&\lesssim
Q(t,1,y)^{\lambda-2}
|\p_sQ(t,1,y)||\p_yQ(t,1,y)|
+Q(t,1,y)^{\lambda-1}|\p_{sy}^2Q(t,1,y)|\\
&\lesssim
Q(t,1,y)^\lambda
\f{|y|}{D(t,1,y)}.
\end{aligned}
\]
This, together with \eqref{QQ-27'}, gives \eqref{QQ-21}.
\end{proof}

We introduce the following weighted function space, which will be used
to establish the required estimates for both the solutions of homogeneous and inhomogeneous problems.
Motivated by \cite{Rei1}, let $X_\kappa$ denote the space
of even continuous function $w=w(t,r)$ satisfying
$\partial_r(rw)\in C([1,\infty)\times\mathbb R)$
and
$\supp w\subset
\{(t,r)\in[1,\infty)\times\R:|r|\leq t\},$
equipped with the norm
\begin{equation}\label{eq:Xkappa}
\begin{aligned}
\|w\|_{X_\kappa}
=\sup_{\substack{t\geq1\\0\leq r\leq t}}
\big\{
&\Acal(t,r)\Bcal(t,r)^{\kappa-1}|w(t,r)|+
\f{\Acal(t,r)\Bcal(t,r)^{\kappa-1}}{1+r}
\left|\p_r(rw)(t,r)\right|
\big\}.
\end{aligned}
\end{equation}

Based on Lemmas~\ref{lem:Phi} and \ref{lem:kernel-pointwise}, we
prove the following estimate.

\begin{proposition}
\label{prop:linear}
Let $1<\kappa<2-\lambda$. Then, for $t\geq1$ and $0\leq r\leq t$,
\begin{equation}
\label{eq:linear-weighted}
|w_{\rm lin}(t,r)|
+\f{1}{1+r}
\left|
\p_r(rw_{\rm lin})(t,r)
\right|
\lesssim
\ve\Acal(t,r)^{-1}\Bcal(t,r)^{1-\kappa}.
\end{equation}
In particular,
\begin{equation}\label{QQ-29}
\|w_{\rm lin}\|_{X_\kappa}\lesssim\ve.
\end{equation}
\end{proposition}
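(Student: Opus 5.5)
The plan is to estimate $\psi_{\rm lin}$ and $\p_r\psi_{\rm lin}$ directly from the representation \eqref{eq:hom-representation}, and then recover $w_{\rm lin}=\psi_{\rm lin}/r$. The support hypothesis $\supp(\psi_0,\psi_1)\subset[-1,1]$ is the key input.

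\textbf{The two traveling terms.} The terms $\f{\ve}{2}\psi_0(r\pm(t-1))$ vanish unless $|r\pm(t-1)|\le1$. For $0\le r\le t$ this forces $t-2\le r\le t$, so $\Bcal\sim1$ and $\Acal\sim t$. In that regime, $r\gtrsim t$ when $t\geq3$, and the bound $\ve\Acal^{-1}$ holds both for the term and for its $r$-derivative divided by $1+r$. For bounded $t$ everything is trivially bounded.

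\textbf{The kernel integrals.} The integration runs only over $|\rho|\le1$, so $\sigma=r-\rho$ satisfies $|\sigma-r|\le1$. Lemma~\ref{lem:kernel-pointwise} gives $|K_j(t,\sigma)|\lesssim Q(t,1,\sigma)^\lambda$ with
\[
Q=\f{(t+1)^2-\sigma^2}{4t}\lesssim \f{(t+1-|\sigma|)(t+1+|\sigma|)}{t}\lesssim \Bcal(t,r).
\]
Here I use $t+1-|\sigma|\lesssim \Bcal$ (up to the shift by 1) and $t+1+|\sigma|\lesssim t$. Hence $|\psi_{\rm lin}(t,r)|\lesssim\ve\Bcal^{\lambda}$.

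\textbf{Bounds on $w_{\rm lin}$.} There are two regimes.
\begin{itemize}
\item For $r\ge t/2$ (so $\Acal\sim r$), we get $|w_{\rm lin}|\lesssim\ve r^{-1}\Bcal^\lambda$. This is $\lesssim\ve\Acal^{-1}\Bcal^{1-\kappa}$ exactly when $\lambda\le1-\kappa$, which fails. So the crude bound $Q^\lambda$ is not enough.
\item For $r\le t/2$, we need $|w_{\rm lin}|\lesssim\ve t^{-\kappa}$.
\end{itemize}

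The fix is to exploit the oddness of $\psi_0,\psi_1$. Since they are odd, I will write
\[
\int K_j(t,r-\rho)\psi_j(\rho)\dd\rho=\int_0^1\bigl[K_j(t,r-\rho)-K_j(t,r+\rho)\bigr]\psi_j(\rho)\dd\rho .
\]
By the mean value theorem and the derivative bound in \eqref{QQ-21}, this difference is bounded by
\[
\sup_{|\sigma-r|\le1}Q^\lambda\f{|\sigma|}{D(t,1,\sigma)}\lesssim \Bcal^\lambda\f{1+r}{t\,\Bcal}.
\]
Dividing by $r$ requires care for $r\le1$. There I will use the integral form $K_j(r-\rho)-K_j(r+\rho)=-\rho\int_{-1}^1\p_\sigma K_j(r+\theta\rho)\dd\theta$, which is odd in $r$, so it is divisible by $r$ via one further derivative. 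Alternatively, I will bound it directly, since for $r\le1$ the quantity $|\sigma|/D\lesssim t^{-2}$ is small and the oddness in $r$ gives a factor $r$.

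This yields $|w_{\rm lin}|\lesssim\ve t^{-1}\Bcal^{\lambda-1}$. Since $\kappa<2-\lambda$ means $\lambda-1<1-\kappa$, and $\Bcal\ge1$, this is $\le\ve\Acal^{-1}\Bcal^{1-\kappa}$. This is exactly where the hypothesis $\kappa<2-\lambda$ enters.

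\textbf{The derivative $\p_r\psi_{\rm lin}$.} Differentiating in $r$ moves the derivative onto $K_j$. The boundary terms vanish because $\psi_j$ is supported away from the moving endpoints except when $\Bcal\sim1$, where they are handled as in the traveling-term case. We then obtain
\[
|\p_r\psi_{\rm lin}|\lesssim\ve\Bcal^\lambda\f{1+r}{t\Bcal},
\]
so that $(1+r)^{-1}|\p_r\psi_{\rm lin}|\lesssim\ve t^{-1}\Bcal^{\lambda-1}$, which suffices as before.

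\textbf{Main obstacle.} The delicate step is the division by $r$ near $r=0$ combined with the cancellation. I would handle it through the symmetric-difference representation together with Lemma~\ref{lem:kernel-pointwise}, and possibly a second-derivative bound derived from \eqref{QQ-19} with $j=2$, exactly as in the proof of \eqref{QQ-21}.
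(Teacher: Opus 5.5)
Your plan is essentially the paper's proof. You use the oddness of $\psi_j$ to pass to the kernel difference $K_j(t,r-q)-K_j(t,r+q)$, bound it through $|\p_yK_j|\lesssim Q^\lambda|y|/D$ with $Q\lesssim\Bcal$ and $D\gtrsim t\Bcal$, and close with $\Bcal^{\lambda-1}\le\Bcal^{1-\kappa}$ (exactly where $\kappa<2-\lambda$ enters), treating the near-cone region separately. A few details need tightening:
\begin{itemize}
\item The symmetric-difference identity holds only when $\Bcal\ge3$, so that $[r-t+1,r+t-1]\supset[-1,1]$ and $r+q\le t-1$. For $\Bcal<3$ your crude bound $Q\sim1$ does suffice, contrary to your remark that it always fails. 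You then need $\psi_{\rm lin}(t,0)=0$ and the fundamental theorem of calculus when $r\le1$.
\item The paper gets the factor $r$ without second derivatives by writing $K_j(t,r-q)-K_j(t,r+q)=-\int_{|r-q|}^{r+q}\p_yK_j(t,y)\dd y$ and using $\int_{|r-q|}^{r+q}y\dd y=2rq$.
\item Your second-derivative route would need a bound on $\mathcal F_\lambda'''$ for $K_0$, which Lemma~\ref{lem:Phi} does not supply. Your alternative via oddness in $r$ does work.
\end{itemize}
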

\begin{proof}
It follows from \eqref{eq:hom-representation} that
\begin{equation}\label{QQ-29'}
\begin{aligned}
\psi_{\rm lin}(t,r)
={}&
\f{\ve}{2}\psi_0(r+t-1)
+\f{\ve}{2}\psi_0(r-t+1)+
\ve c_\lambda
\sum_{j=0}^1
\int_{r-t+1}^{r+t-1}
K_j(t,r-\rho)\psi_j(\rho)\dd\rho,
\end{aligned}
\end{equation}
where $\supp\psi_j\subset[-1,1]$, $\psi_j$ is odd, and
$K_j(t,\cdot)$ is even for $j=0,1$.

It is asserted that
\begin{equation}\label{ass1}
|\psi_{\rm lin}(t,r)|
\lesssim
\ve r\Acal(t,r)^{-1}\Bcal(t,r)^{1-\kappa}
\end{equation}
and
\begin{equation}\label{ass2}
|\p_r\psi_{\rm lin}(t,r)|
\lesssim
\ve(1+r)\Acal(t,r)^{-1}\Bcal(t,r)^{1-\kappa},
\end{equation}
where $1<\kappa<2-\lambda$,  $t\geq1$ and $0\leq r\leq t$.

To prove \eqref{ass1} and \eqref{ass2}, we consider separately the
cases of $\Bcal(t,r)\geq3$ and $1\leq\Bcal(t,r)<3$.
\vskip 0.2 true cm

{\bf Case 1.  $\Bcal(t,r)\geq3$ }

\vskip 0.2 true cm

Due to $t-r\geq2$, one has $r-t+1\leq-1$ and $r+t-1\geq1$.
Then by the support condition of the initial data, $\psi_{\rm lin}(t,r)$ can be written as
\[
\psi_{\rm lin}(t,r)
=\ve c_\lambda
\sum_{j=0}^1
\int_{-1}^1
K_j(t,r-\rho)\psi_j(\rho)\dd\rho.
\]
Then using the oddness of $\psi_j$ and the evenness of $K_j$, one obtains
\begin{equation}\label{QQ-30}
\begin{aligned}
\psi_{\rm lin}(t,r)
&=\ve c_\lambda
\sum_{j=0}^1
\int_0^1
\left[
K_j(t,r-q)-K_j(t,r+q)
\right]\psi_j(q)\dd q\\
&=\ve c_\lambda
\sum_{j=0}^1
\int_0^1
\left[
K_j(t,|r-q|)-K_j(t,r+q)
\right]\psi_j(q)\dd q\\
&=-\ve c_\lambda
\sum_{j=0}^1
\int_0^1
\int_{|r-q|}^{r+q}
\p_yK_j(t,y)\dd y\,
\psi_j(q)\dd q.
\end{aligned}
\end{equation}
By $q\in[0,1]$, $0\leq r\leq t$ and $|r-q|\leq y\leq r+q$, we arrive at
\[
t+1-y
\geq
\Bcal(t,r)-q
\geq
\Bcal(t,r)-1
\geq
\f23\Bcal(t,r)
\]
and
\[
t+1+y
\geq
t+1
\geq
\f12(1+t+r)=\f12\Acal(t,r),
\]
which yield
\[
D(t,1,y)=(t+1-y)(t+1+y)
\gtrsim
\Acal(t,r)\Bcal(t,r).
\]
On the other hand,  $\Bcal(t,r)\geq3$ and $0\leq r\leq t$ imply that
\[
\begin{aligned}
D(t,1,y)=(t+1)^2-y^2
&\leq
(t+1)^2-(r-q)^2=
\bigl(\Bcal(t,r)+q\bigr)
\bigl(\Acal(t,r)-q\bigr)
\lesssim
\Acal(t,r)\Bcal(t,r)
\end{aligned}
\]
 and
\[
\Acal(t,r)=1+t+r\leq2t.
\]
Therefore,
\begin{equation}\label{QQ-31}
D(t,1,y)\sim\Acal(t,r)\Bcal(t,r),\quad
Q(t,1,y)=\f{D(t,1,y)}{4t}
\lesssim\f{\Acal(t,r)\Bcal(t,r)}{\Acal(t,r)}=\Bcal(t,r).
\end{equation}
Due to $\Bcal(t,r)\geq3$, one has
\[
0\leq y\leq r+q\leq r+1\leq t-1.
\]
Then it follows from \eqref{QQ-21} and \eqref{QQ-31} that for  $j=0,1$,
\begin{equation}\label{QQ-31''}
\begin{aligned}
|\p_yK_j(t,y)|
&\lesssim
Q(t,1,y)^\lambda
\f{y}{D(t,1,y)}\lesssim
\Acal(t,r)^{-1}\Bcal(t,r)^{\lambda-1}y.
\end{aligned}
\end{equation}
Substituting \eqref{QQ-31''} into \eqref{QQ-30} yields
\begin{equation}\label{QQ-32}
\begin{aligned}
|\psi_{\rm lin}(t,r)|
&\lesssim
\ve\Acal(t,r)^{-1}\Bcal(t,r)^{\lambda-1}
\sum_{j=0}^1
\int_0^1
\int_{|r-q|}^{r+q}
y\dd y\,|\psi_j(q)|\dd q\\
&=2\ve r\Acal(t,r)^{-1}\Bcal(t,r)^{\lambda-1}
\sum_{j=0}^1
\int_0^1q|\psi_j(q)|\dd q\\
&\lesssim
\ve r\Acal(t,r)^{-1}\Bcal(t,r)^{\lambda-1}.
\end{aligned}
\end{equation}
On the other hand, differentiating the first equality in \eqref{QQ-30} with respect
to $r$ and using \eqref{QQ-31''}, one can obtain
\[
\begin{aligned}
|\p_r\psi_{\rm lin}(t,r)|
&\lesssim
\ve\sum_{j=0}^1\int_0^1
\Bigl(
|\p_yK_j(t,r-q)|
+|\p_yK_j(t,r+q)|
\Bigr)
|\psi_j(q)|\dd q\\
&\lesssim
\ve\Acal(t,r)^{-1}\Bcal(t,r)^{\lambda-1}
\sum_{j=0}^1
\int_0^1
\bigl(|r-q|+r+q\bigr)|\psi_j(q)|\dd q\\
&\lesssim
\ve\Acal(t,r)^{-1}\Bcal(t,r)^{\lambda-1}
\sum_{j=0}^1
\int_0^1 2
\bigl(r+1\bigr)|\psi_j(q)|\dd q\\
&\lesssim
\ve(1+r)
\Acal(t,r)^{-1}\Bcal(t,r)^{\lambda-1}.
\end{aligned}
\]
For $\kappa<2-\lambda$ and $\Bcal(t,r)\geq3$, we then have
\begin{equation}\label{QQ-32'}
\begin{aligned}
|\psi_{\rm lin}(t,r)|
\lesssim
\ve r\Acal(t,r)^{-1}\Bcal(t,r)^{1-\kappa},
\quad
|\p_r\psi_{\rm lin}(t,r)|
\lesssim
\ve(1+r)\Acal(t,r)^{-1}\Bcal(t,r)^{1-\kappa}.
\end{aligned}
\end{equation}
Therefore, \eqref{ass1} and \eqref{ass2} hold for
$\Bcal(t,r)\geq3$.

\vskip 0.2 true cm

{\bf Case 2. $1\leq\Bcal(t,r)<3$.}

\vskip 0.2 true cm

Differentiating \eqref{QQ-29'} with respect to $r$ yields
\begin{equation}\label{YHC-07}
\begin{aligned}
&\p_r\psi_{\rm lin}(t,r)
=\f{\ve}{2}\psi_0'(r+t-1)
+\f{\ve}{2}\psi_0'(r-t+1)\\
&+\ve c_\lambda
\sum_{j=0}^1
\big[K_j(t,1-t)\psi_j(r+t-1)
-K_j(t,t-1)\psi_j(r-t+1)
+\int_{r-t+1}^{r+t-1}
\p_yK_j(t,r-\rho)\psi_j(\rho)\dd\rho
\big].
\end{aligned}
\end{equation}
Due to $0\leq r\leq t$ and $1\leq\Bcal(t,r)<3$, we have
\[
t\leq\Acal(t,r)=1+t+r\leq3t
\]
and
\[
1+r
\leq
\Acal(t,r)
=
\Bcal(t,r)+2r
<
3+2r
\leq
3(1+r).
\]
Thus,
\begin{equation}\label{QQ-34}
\Acal(t,r)\sim t\sim1+r.
\end{equation}

We first treat the terms involving $K_j(t,1-t)$ and
$K_j(t,t-1)$ in \eqref{YHC-07}. It follows from Lemma \ref{lem:kernel-pointwise} and the evenness of $K_j$ that for $j=0,1$,
\[
|K_j(t,1-t)|
+|K_j(t,t-1)|\lesssim Q(t,1,|t-1|)^{\lambda}
=1.
\]
As for the  term $\p_yK_j(t,r-\rho)$, by \eqref{QQ-29'}, Lemma \ref{lem:kernel-pointwise} and the support condition on the initial
data, one has
\[
\begin{aligned}
D(t,1,r-\rho)
&=(t+1)^2-(r-\rho)^2=
\bigl(\Bcal(t,r)+\rho\bigr)
\bigl(\Acal(t,r)-\rho\bigr)
\end{aligned}
\]
and
\[
\rho\in[-1,1]\cap[r-t+1,r+t-1].
\]
Due to
$
\rho\geq r-t+1=2-\Bcal(t,r)
$
and $|\rho|\leq1$, it follows from $1\leq\Bcal(t,r)<3$ and $\Acal(t,r)\geq2$ that
\[
2\leq\Bcal(t,r)+\rho<4, \quad \f12\Acal(t,r)
\leq
\Acal(t,r)-\rho
\leq
\f32\Acal(t,r).
\]
Then
\begin{equation}\label{QQ-33'}
D(t,1,r-\rho)\sim\Acal(t,r).
\end{equation}
Combining \eqref{QQ-33'} with \eqref{QQ-34} derives
\begin{equation}\label{QQ-33}
Q(t,1,r-\rho)
=\f{D(t,1,r-\rho)}{4t}
\sim1.
\end{equation}
Moreover, by
$\rho\in[r-t+1,r+t-1]$, one has
\[
|r-\rho|\leq t-1.
\]
Thus, \eqref{QQ-21}, \eqref{QQ-33'} and \eqref{QQ-33} imply that
for $j=0,1$,
\[
\begin{aligned}
|\p_yK_j(t,r-\rho)|
&\lesssim
Q(t,1,r-\rho)^\lambda
\f{|r-\rho|}{D(t,1,r-\rho)}\lesssim
\f{r+|\rho|}{\Acal(t,r)}
\lesssim
\f{1+r}{\Acal(t,r)}\Bcal(t,r)^{1-\kappa}.
\end{aligned}
\]
Using the above estimates in the expression of
$\p_r\psi_{\rm lin}(t,r)$, we obtain
\[
\begin{aligned}
|\p_r\psi_{\rm lin}(t,r)|
\lesssim&
\ve\bigl(
|\psi_0'(r+t-1)|
+|\psi_0'(r-t+1)|
\bigr)+
\ve\sum_{j=0}^1
\bigl(
|\psi_j(r+t-1)|
+|\psi_j(r-t+1)|
\bigr)\\
&+\ve\f{1+r}{\Acal(t,r)}
\sum_{j=0}^1
\int_{[r-t+1,r+t-1]\cap[-1,1]}
|\psi_j(\rho)|\dd\rho.
\end{aligned}
\]
Since $\psi_0$, $\psi_1$, and $\psi_0'$ are bounded, and
$\Acal(t,r)\sim1+r$ by \eqref{QQ-34}, it follows that
\begin{equation}\label{QQ-35'}
|\p_r\psi_{\rm lin}(t,r)|
\lesssim
\ve(1+r)\Acal(t,r)^{-1}
\Bcal(t,r)^{1-\kappa}.
\end{equation}

We next estimate $\psi_{\rm lin}(t,r)$. If
$r\geq1$, one has that for $1\leq\Bcal(t,r)<3$,
\[
\Acal(t,r)=\Bcal(t,r)+2r\sim r.
\]
Using \eqref{QQ-29'} and \eqref{QQ-33}, we arrive at
\[
|\psi_{\rm lin}(t,r)|
\lesssim
\ve
\lesssim
\ve\f{r}{\Acal(t,r)}\Bcal(t,r)^{1-\kappa}.
\]
If $0\leq r\leq1$, then $\Bcal(t,r)<3$ gives $t\leq3$.
Since $\psi_{\rm lin}(t,\cdot)$ is odd, $\psi_{\rm lin}(t,0)=0$ holds.
Hence,
\begin{equation}\label{QQ-35}
\begin{aligned}
|\psi_{\rm lin}(t,r)|
&=
\left|
\psi_{\rm lin}(t,r)-\psi_{\rm lin}(t,0)
\right|\leq
\int_0^r
|\p_q\psi_{\rm lin}(t,q)|\dd q.
\end{aligned}
\end{equation}
For $0\leq q\leq r$,  \eqref{QQ-32'} and \eqref{QQ-35'} give
\[
|\p_q\psi_{\rm lin}(t,q)|
\lesssim
\ve.
\]
This, together with \eqref{QQ-35}, implies
\[
|\psi_{\rm lin}(t,r)|
\lesssim
\ve r.
\]
For $t\leq3$, $0\leq r\leq1$, and
$1\leq\Bcal(t,r)<3$, one can obtain
\[
\Acal(t,r)\sim1,
\quad
\Bcal(t,r)^{1-\kappa}\sim1.
\]
Therefore,
\[
|\psi_{\rm lin}(t,r)|
\lesssim
\ve r\Acal(t,r)^{-1}\Bcal(t,r)^{1-\kappa}.
\]
Collecting the estimates in Cases 1--2, \eqref{ass1} and
\eqref{ass2} hold for $t\geq1$ and $0\leq r\leq t$.

For $r>0$,
using \eqref{ass1},
\eqref{ass2} and
\[
w_{\rm lin}(t,r)=\f{\psi_{\rm lin}(t,r)}{r},
\quad
\p_r(rw_{\rm lin})(t,r)=\p_r\psi_{\rm lin}(t,r),
\]
we get
\[
\begin{aligned}
|w_{\rm lin}(t,r)|
+
\f{1}{1+r}
\left|
\p_r(rw_{\rm lin})(t,r)
\right|
&=
\f{|\psi_{\rm lin}(t,r)|}{r}
+
\f{|\p_r\psi_{\rm lin}(t,r)|}{1+r}\lesssim
\ve\Acal(t,r)^{-1}\Bcal(t,r)^{1-\kappa}.
\end{aligned}
\]
For $r=0$, the oddness of $\psi_{\rm lin}(t,\cdot)$ gives
\[
w_{\rm lin}(t,0)=
\lim_{r\to0}
\f{\psi_{\rm lin}(t,r)-\psi_{\rm lin}(t,0)}{r}=\p_r\psi_{\rm lin}(t,0),
\quad
\p_r(rw_{\rm lin})(t,0)=\p_r\psi_{\rm lin}(t,0).
\]
It follows from \eqref{ass2} with $r=0$ that
\[
\begin{aligned}
|w_{\rm lin}(t,0)|
+
\left|
\p_r(rw_{\rm lin})(t,0)
\right|
&=
2|\p_r\psi_{\rm lin}(t,0)|\lesssim
\ve\Acal(t,0)^{-1}\Bcal(t,0)^{1-\kappa}.
\end{aligned}
\]
Therefore, \eqref{eq:linear-weighted} follows. By the definition of
$X_\kappa$ \eqref{eq:Xkappa}, we further obtain
\[
\|w_{\rm lin}\|_{X_\kappa}\lesssim\ve,
\]
which proves \eqref{QQ-29} and completes the proof of
Proposition~\ref{prop:linear}.
\end{proof}

\section{Weighted estimates of solutions for the inhomogeneous problem}
\label{sec:nonlinear-Riemann-operator}

In this section, we establish some weighted pointwise estimates for the
solutions of the inhomogeneous problem
associated with the nonlinear source $G_w$
introduced in Subsection~\ref{liou}. With
$\lambda=(\mu-2)/2$ and $\beta=\mu(p-1)/2$, we choose $\kappa$ such that
\begin{equation}
\label{eq:kappa-range}
\f{2-\beta}{p-1}
<\kappa
<\min\{p+\beta-1,\,2-\lambda\}.
\end{equation}
It is worth noting that for
$\f{14}{5}\leq\mu<3$ and
$\f{5}{3}<p\leq1+\f{2}{\mu}$, the choice of $\kappa$ in
\eqref{eq:kappa-range} can be realized. Indeed,
\[
2-\lambda-\f{2-\beta}{p-1}
=\f{3p-5}{p-1}>0,
\quad
p+\beta-1-\f{2-\beta}{p-1}>0.
\]
Moreover, $\f{2-\beta}{p-1}>1$, and hence $\kappa>1$ can be chosen.

Throughout this section, let $w\in X_\kappa$, where $X_\kappa$ is
the function space introduced in Section~\ref{sec:linear-estimates}
with norm \eqref{eq:Xkappa}. In view of \eqref{eq:inhom-representation}, for $r>0$ we define
\begin{equation}
\label{qq:method-nonlinear}
r(\Tcal w)(t,r)
=c_\lambda
\int_1^t
\int_{r-(t-s)}^{r+(t-s)}
\Ecal_\lambda(t,s,r-\rho)
G_w(s,\rho)\dd\rho\dd s.
\end{equation}
To facilitate the derivation of the weighted estimates, we rewrite
\eqref{qq:method-nonlinear} in the following form.

\begin{lemma}
\label{lem:nonlinear-representation}
For $t\geq1$ and $0<r\leq t$, it holds that
\begin{equation}
\label{qq:40}
\begin{aligned}
c_\lambda^{-1}r(\Tcal w)(t,r)
={}&
\int_1^t
\int_{|t-s-r|}^{t-s+r}
\Ecal_\lambda(t,s,r-q)
G_w(s,q)\dd q\dd s\\
&+\int_1^{t-r}
\int_0^{t-s-r}
\left[
\Ecal_\lambda(t,s,r-q)
-\Ecal_\lambda(t,s,r+q)
\right]
G_w(s,q)\dd q\dd s.
\end{aligned}
\end{equation}
Moreover,
\begin{equation}
\label{qq:42}
\begin{aligned}
c_\lambda^{-1}\p_r(r\Tcal w)(t,r)
={}&
4^\lambda
\int_1^t
\left[
G_w(s,t-s+r)
+G_w(s,t-s-r)
\right]\dd s\\
&+\int_1^t
\int_{|t-s-r|}^{t-s+r}
\p_y\Ecal_\lambda(t,s,r-q)
G_w(s,q)\dd q\dd s\\
&+\int_1^{t-r}
\int_0^{t-s-r}
\left[
\p_y\Ecal_\lambda(t,s,r-q)
-\p_y\Ecal_\lambda(t,s,r+q)
\right]
G_w(s,q)\dd q\dd s.
\end{aligned}
\end{equation}
Here the last terms in
\eqref{qq:40} and \eqref{qq:42} vanish when $t-r\leq1$.
\end{lemma}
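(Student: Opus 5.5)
For \eqref{qq:40}, I will start from \eqref{qq:method-nonlinear}. For fixed $s\in[1,t]$, the inner integral runs over $\rho\in[r-(t-s),r+(t-s)]$, and $G_w(s,\cdot)$ is odd. I will split by the sign of $r-(t-s)$.

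If $t-s\leq r$, the interval $[r-(t-s),r+(t-s)]$ lies in $[0,\infty)$. The inner integral is then already $\int_{|t-s-r|}^{t-s+r}\Ecal_\lambda(t,s,r-q)G_w(s,q)\dd q$, since $|t-s-r|=r-(t-s)$.

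If $t-s>r$, which can only happen for $s<t-r$, the interval straddles the origin. I will split it into $[r-(t-s),0]$, $[0,t-s-r]$ and $[t-s-r,t-s+r]$.
\begin{itemize}
\item On $[r-(t-s),0]$, substitute $\rho=-q$ with $q\in[0,t-s-r]$. Oddness of $G_w$ turns this piece into $-\int_0^{t-s-r}\Ecal_\lambda(t,s,r+q)G_w(s,q)\dd q$.
\item Adding the $[0,t-s-r]$ piece produces the kernel difference $\Ecal_\lambda(t,s,r-q)-\Ecal_\lambda(t,s,r+q)$.
\item The remaining piece on $[t-s-r,t-s+r]=[|t-s-r|,t-s+r]$ is exactly the first integral in \eqref{qq:40}.
\end{itemize}
Integrating in $s$ gives \eqref{qq:40}. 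The difference term is restricted to $s\in[1,t-r]$, so it is empty when $t-r\leq1$. Only $|r\pm q|\leq t-s$ is used, so all kernel evaluations stay inside the domain of Lemma~\ref{lem:kernel-pointwise}.

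For \eqref{qq:42}, I will not differentiate \eqref{qq:40}, because the lower limit $|t-s-r|$ has a corner at $s=t-r$. Instead I will differentiate \eqref{qq:method-nonlinear} directly in $r$ using the Leibniz rule. The moving endpoints $\rho=r\pm(t-s)$ produce the boundary terms
\[
\Ecal_\lambda(t,s,-(t-s))G_w(s,r+t-s)-\Ecal_\lambda(t,s,t-s)G_w(s,r-t+s).
\]
By \eqref{eq:kernel-characteristic} both kernel values equal $4^\lambda$. Oddness gives $-G_w(s,r-t+s)=G_w(s,t-s-r)$, which yields the first line of \eqref{qq:42}.

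The interior term is $\int_1^t\int_{r-(t-s)}^{r+(t-s)}\p_y\Ecal_\lambda(t,s,r-\rho)G_w(s,\rho)\dd\rho\dd s$. I will apply to it the same splitting argument as above, with $\Ecal_\lambda$ replaced by $\p_y\Ecal_\lambda$. This uses only the oddness of $G_w$ and not any parity of the kernel. The result is the second and third lines of \eqref{qq:42}.

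The main point needing care is justifying differentiation under the integral. For $w\in X_\kappa$, the source $G_w$ is continuous. By Lemma~\ref{lem:kernel-pointwise}, $\p_y\Ecal_\lambda$ is bounded on the closed characteristic triangle, since $Q\lesssim t$ there for fixed $t$. The map $r\mapsto\int_{r-(t-s)}^{r+(t-s)}(\cdots)\dd\rho$ is then $C^1$, with derivatives bounded uniformly in $s\in[1,t]$. Dominated convergence therefore allows differentiation under the $s$-integral, which completes the proof.
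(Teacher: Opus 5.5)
Your proposal is correct and follows the paper's proof essentially step for step. For \eqref{qq:40} you use the same case split on the sign of $r-(t-s)$, the reflection $\rho=-q$ and the oddness of $G_w$; for \eqref{qq:42} you Leibniz-differentiate \eqref{qq:method-nonlinear}, use $\Ecal_\lambda(t,s,\pm(t-s))=4^\lambda$, and reapply the splitting to the $\p_y\Ecal_\lambda$ term. Your justification for differentiating under the integral (boundedness of $\p_y\Ecal_\lambda$ from $Q\le t$ and $D\ge 4ts$) is a detail the paper leaves implicit.
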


\begin{proof}
Note that when $t-s\leq r$,  $r-(t-s)=|t-s-r|$ holds. Then
\begin{equation}\label{QQ-37}
\int_{r-(t-s)}^{r+(t-s)}
\Ecal_\lambda(t,s,r-\rho)G_w(s,\rho)\dd\rho
=\int_{|t-s-r|}^{t-s+r}
\Ecal_\lambda(t,s,r-\rho)G_w(s,\rho)\dd\rho.
\end{equation}
When $t-s>r$,  by the change of variable $\rho=-q$ and the oddness of $G_w(s,\cdot)$, one can obtain
\begin{equation}\label{QQ-38}
\begin{aligned}
&\int_{r-(t-s)}^{r+(t-s)}
\Ecal_\lambda(t,s,r-\rho)G_w(s,\rho)\dd\rho\\
&=\int_{r-(t-s)}^0
\Ecal_\lambda(t,s,r-\rho)G_w(s,\rho)\dd\rho
+\int_0^{t-s+r}
\Ecal_\lambda(t,s,r-\rho)G_w(s,\rho)\dd\rho\\
&=-\int_0^{t-s-r}
\Ecal_\lambda(t,s,r+q)G_w(s,q)\dd q
+\int_0^{t-s+r}
\Ecal_\lambda(t,s,r-q)G_w(s,q)\dd q\\
&=\int_{t-s-r}^{t-s+r}
\Ecal_\lambda(t,s,r-q)G_w(s,q)\dd q+
\int_0^{t-s-r}
\left[
\Ecal_\lambda(t,s,r-q)
-\Ecal_\lambda(t,s,r+q)
\right]
G_w(s,q)\dd q.
\end{aligned}
\end{equation}
Combining the two cases and integrating with respect to $s$ over
$[1,t]$ yield \eqref{qq:40}.

We next turn to deal with $\p_r(r\Tcal w)(t,r)$. It follows from Leibniz's rule that
\[
\begin{aligned}
&\p_r
\int_{r-(t-s)}^{r+(t-s)}
\Ecal_\lambda(t,s,r-\rho)G_w(s,\rho)\dd\rho\\
&=\Ecal_\lambda(t,s,-(t-s))G_w(s,r+t-s)
-\Ecal_\lambda(t,s,t-s)G_w(s,r-t+s)\\
&\quad+
\int_{r-(t-s)}^{r+(t-s)}
\p_y\Ecal_\lambda(t,s,r-\rho)G_w(s,\rho)\dd\rho\\
&=4^\lambda
\left[
G_w(s,r+t-s)-G_w(s,r-t+s)
\right]+
\int_{r-(t-s)}^{r+(t-s)}
\p_y\Ecal_\lambda(t,s,r-\rho)G_w(s,\rho)\dd\rho\\
&=4^\lambda
\left[
G_w(s,t-s+r)+G_w(s,t-s-r)
\right]+
\int_{r-(t-s)}^{r+(t-s)}
\p_y\Ecal_\lambda(t,s,r-\rho)G_w(s,\rho)\dd\rho,
\end{aligned}
\]
where $\Ecal_\lambda(t,s,\pm(t-s))=4^\lambda$ and the oddness of
$G_w(s,\cdot)$ have been used.
Replacing $\Ecal_\lambda$ by $\p_y\Ecal_\lambda$ in
\eqref{QQ-37}--\eqref{QQ-38}, for $t-s\leq r$, one has
\[
\int_{r-(t-s)}^{r+(t-s)}
\p_y\Ecal_\lambda(t,s,r-\rho)G_w(s,\rho)\dd\rho
=\int_{|t-s-r|}^{t-s+r}
\p_y\Ecal_\lambda(t,s,r-\rho)G_w(s,\rho)\dd\rho,
\]
while for $t-s>r$, we can arrive at
\[
\begin{aligned}
&\int_{r-(t-s)}^{r+(t-s)}
\p_y\Ecal_\lambda(t,s,r-\rho)G_w(s,\rho)\dd\rho\\
&=
-\int_0^{t-s-r}
\p_y\Ecal_\lambda(t,s,r+q)G_w(s,q)\dd q
+
\int_0^{t-s+r}
\p_y\Ecal_\lambda(t,s,r-q)G_w(s,q)\dd q\\
&=
\int_{t-s-r}^{t-s+r}
\p_y\Ecal_\lambda(t,s,r-q)G_w(s,q)\dd q
+
\int_0^{t-s-r}
\left[
\p_y\Ecal_\lambda(t,s,r-q)
-
\p_y\Ecal_\lambda(t,s,r+q)
\right]
G_w(s,q)\dd q.
\end{aligned}
\]
Therefore, integration with respect to $s$ over $[1,t]$ gives
\eqref{qq:42}.
\end{proof}

Next, we establish the estimates for $\Ecal_\lambda$ and
$\p_y\Ecal_\lambda$ appearing in \eqref{qq:40} and \eqref{qq:42}.
For this purpose, set
\begin{equation}
\label{qq:44}
A_0=t+r,\quad B_0=t-r,\quad
\xi=s+q,\quad \eta=q-s,
\end{equation}
where $0\leq q\leq s$.

\begin{lemma}
\label{lem:characteristic-kernel}
If $|t-s-r|\leq q\leq t-s+r$, then
\begin{equation}
\label{qq:45}
|\Ecal_\lambda(t,s,r-q)|
\lesssim
\xi^\lambda(\xi-\eta)^{-\lambda},
\quad
|\p_y\Ecal_\lambda(t,s,r-q)|
\lesssim
\xi^{\lambda-1}(\xi-\eta)^{-\lambda}.
\end{equation}
If $1\leq s\leq t-r$ and $0\leq q\leq\min\{s,t-s-r\}$, then
\begin{equation}
\label{qq:46}
\begin{aligned}
&
\left|
\Ecal_\lambda(t,s,r-q)
-
\Ecal_\lambda(t,s,r+q)
\right|
\lesssim
\f{rqB_0^\lambda}{A_0(B_0-\eta)}
(\xi-\eta)^{-\lambda},\\
&
|\p_y\Ecal_\lambda(t,s,r-q)|
+
|\p_y\Ecal_\lambda(t,s,r+q)|
\lesssim
\f{(r+q)B_0^\lambda}{A_0(B_0-\eta)}
(\xi-\eta)^{-\lambda}.
\end{aligned}
\end{equation}
\end{lemma}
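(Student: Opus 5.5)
The plan is to reduce everything to Lemma~\ref{lem:kernel-pointwise} together with its sharper intermediate form. Since $\Ecal_\lambda=4^\lambda\mathcal F_\lambda(Q)$, Lemma~\ref{lem:Phi} and \eqref{QQ-23} give
\[
|\Ecal_\lambda(t,s,y)|\lesssim Q^\lambda,\qquad
|\p_y\Ecal_\lambda(t,s,y)|\lesssim Q^{\lambda-1}\f{|y|}{2ts}=Q^\lambda\f{|y|}{D(t,s,y)} .
\]
The remaining work is to bound $D(t,s,r\mp q)$ and $Q$ in terms of $A_0,B_0,\xi,\eta$. The key algebraic identity is
\[
D(t,s,r-q)=(t+s-r+q)(t+s+r-q)=(B_0+\xi)(A_0-\eta),
\]
and similarly $D(t,s,r+q)=(B_0+s-q)(A_0+s+q)=(B_0-\eta)(A_0+\xi)$. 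Throughout I use the standing restriction $0\le q\le s$ from \eqref{qq:44}. It gives $\eta\le0$, $\xi-\eta=2s$, and $\xi\le 2s$, so that $\xi\sim s\sim\xi-\eta$. This restriction is essential: without it, the derivative bound in \eqref{qq:45} fails near the characteristic $q=t-s+r$ when $s$ is small.

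\textbf{Proof of \eqref{qq:45}.} In the first region, the reduction $\xi\sim\xi-\eta\sim s$ shows that the claims amount to $|\Ecal_\lambda|\lesssim1$ and $|\p_y\Ecal_\lambda|\lesssim s^{-1}$. For the first claim, $B_0+\xi\le t+s+r\lesssim t$, while $A_0-\eta\le t+r+s\le 3t$ and $B_0+\xi\lesssim s+t$. The sharper bound comes from $q\ge t-s-r$: then $B_0\le \xi$, so $B_0+\xi\le 2\xi\lesssim s$. Hence $Q=D/(4ts)\lesssim s\cdot t/(ts)=1$. For the derivative I use the form $Q^{\lambda-1}|y|/(ts)$ together with $Q\ge1$ and $\lambda<1$, which gives $Q^{\lambda-1}\le1$. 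Since $|y|=|r-q|\le t-s\le t$, this yields $|\p_y\Ecal_\lambda|\lesssim 1/s\sim \xi^{\lambda-1}(\xi-\eta)^{-\lambda}$.

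\textbf{Proof of \eqref{qq:46}.} For the difference I use the evenness of $\Ecal_\lambda$ in $y$, as in \eqref{QQ-30}, and write
\[
\Ecal_\lambda(t,s,r-q)-\Ecal_\lambda(t,s,r+q)=-\int_{|r-q|}^{r+q}\p_y\Ecal_\lambda(t,s,y)\dd y,
\qquad \int_{|r-q|}^{r+q}y\dd y=2rq .
\]
For $0\le y\le r+q\le t-s$ one has
\[
D(t,s,y)=(t+s-y)(t+s+y)\ge (B_0+s-q)(t+s)\gtrsim (B_0-\eta)A_0 .
\]
This uses $A_0\le 2t$, and $D$ is decreasing in $|y|$. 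Therefore
\[
Q^\lambda/D=D^{\lambda-1}(4ts)^{-\lambda}\lesssim \bigl((B_0-\eta)A_0\bigr)^{\lambda-1}(ts)^{-\lambda}.
\]
To reach the target $B_0^\lambda A_0^{-1}(B_0-\eta)^{-1}(\xi-\eta)^{-\lambda}$, it remains to check $(B_0-\eta)^\lambda A_0^\lambda t^{-\lambda}\lesssim B_0^\lambda$. This follows from $A_0\le2t$ and $B_0-\eta=B_0+s-q\le 2B_0$, using $s\le t-r=B_0$ and $q\ge0$. It yields the first line of \eqref{qq:46}. The second line follows by the same computation applied pointwise at $y=r\pm q$, with $|y|\le r+q$ and $|r\pm q|\le t-s$.

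The main obstacle is choosing the correct form of the derivative bound. In the first region, $Q^\lambda|y|/D$ with only the lower bound on $D$ is too lossy near the light cone. One must instead use $Q^{\lambda-1}\le1$, and the restriction $q\le s$ then converts $\xi,\xi-\eta$ into $s$. In the second region, the key is the monotone lower bound $D\gtrsim (B_0-\eta)A_0$ together with the comparison $B_0-\eta\lesssim B_0$.
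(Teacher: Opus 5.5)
Your proof is correct and follows essentially the paper's route: you feed the factorizations $D(t,s,r-q)=(B_0+\xi)(A_0-\eta)$ and $D(t,s,r+q)=(B_0-\eta)(A_0+\xi)$ into the bounds of Lemma~\ref{lem:kernel-pointwise}, and you handle the difference by evenness plus the fundamental theorem of calculus, making explicit the restriction $q\le s$ that the paper uses implicitly via $2q\le\xi$. Collapsing $\xi\sim\xi-\eta\sim s$ up front and writing $Q^\lambda/D=D^{\lambda-1}(4ts)^{-\lambda}$, so that only a lower bound on $D$ is needed, are minor simplifications; note that your aside $B_0+\xi\le t+s+r$ is false in general (take $r=0$, $t/2\le s<t$), though you never use it.
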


\begin{proof}
For $|t-s-r|\leq q\leq t-s+r$, it follows from \eqref{qq:44} and
\eqref{QQ-21'} that
\[
B_0\leq\xi\leq A_0,
\quad
D(t,s,r-q)=(B_0+\xi)(A_0-\eta),
\quad
4ts=2t(\xi-\eta).
\]
Moreover,
\begin{equation}\label{QQ-39'}
B_0+\xi\lesssim\xi,
\quad
A_0-\eta\lesssim t,
\end{equation}
and hence
\begin{equation}\label{QQ-39}
Q(t,s,r-q)
=\f{(B_0+\xi)(A_0-\eta)}
{2t(\xi-\eta)}
\lesssim
\f{\xi}{\xi-\eta}.
\end{equation}
For $r-q\geq0$, using $|r-q|\leq A_0-\eta$, one has
\[
\f{|r-q|}{D(t,s,r-q)}
\leq
\f{A_0-\eta}{(B_0+\xi)(A_0-\eta)}
\leq
\xi^{-1},
\]
while for $r-q<0$, using $|r-q|\leq B_0+\xi$ and
$2q\leq\xi\leq A_0$, one can obtain
\begin{equation}\label{QQ-40}
\f{|r-q|}{D(t,s,r-q)}
\leq
(A_0-\eta)^{-1}
\leq
\xi^{-1}.
\end{equation}
This, together with \eqref{QQ-20} and
\eqref{QQ-39}--\eqref{QQ-40}, gives
\[
|\Ecal_\lambda(t,s,r-q)|
\lesssim
Q(t,s,r-q)^\lambda
\lesssim
\xi^\lambda(\xi-\eta)^{-\lambda},
\]
and
\[
|\p_y\Ecal_\lambda(t,s,r-q)|
\lesssim
Q(t,s,r-q)^\lambda
\f{|r-q|}{D(t,s,r-q)}
\lesssim
\xi^{\lambda-1}(\xi-\eta)^{-\lambda}.
\]
Thus \eqref{qq:45} is shown.

For $1\leq s\leq t-r$ and
$0\leq q\leq\min\{s,t-s-r\}$, one has $1\leq\xi\leq B_0$.
By the evenness of $\Ecal_\lambda(t,s,\cdot)$, we get
\begin{equation}\label{QQ-42}
\Ecal_\lambda(t,s,r-q)-\Ecal_\lambda(t,s,r+q)
=
-\int_{|r-q|}^{r+q}
\p_y\Ecal_\lambda(t,s,y)\dd y.
\end{equation}
It follows from \eqref{QQ-20} and \eqref{QQ-39'} that for $|r-q|\leq y\leq r+q$ and $1\leq\xi\leq B_0$,
\[
D(t,s,y)
\leq
D(t,s,r-q)
=
(B_0+\xi)(A_0-\eta)
\lesssim
tB_0
\]
and
\[
D(t,s,y)
\geq
D(t,s,r+q)
=
(A_0+\xi)(B_0-\eta)\geq A_0(B_0-\eta).
\]
Then
\[
Q(t,s,y)=
\f{D(t,s,y)}{2t(\xi-\eta)}
\lesssim
\f{B_0}{\xi-\eta}.
\]
Therefore, for $|r-q|\leq y\leq r+q$,
\begin{equation}\label{QQ-41}
|\p_y\Ecal_\lambda(t,s,y)|\lesssim
Q(t,s,y)^\lambda
\f{|y|}{D(t,s,y)}
\lesssim
\f{(r+q)B_0^\lambda}
{A_0(B_0-\eta)}
(\xi-\eta)^{-\lambda}
.
\end{equation}
Since $\p_y\Ecal_\lambda(t,s,\cdot)$ is odd, using
\eqref{QQ-41} at $y=|r-q|$ and $y=r+q$, one has
\begin{equation}\label{QQ-43}
|\p_y\Ecal_\lambda(t,s,r-q)|
+
|\p_y\Ecal_\lambda(t,s,r+q)|
\lesssim
\f{(r+q)B_0^\lambda}
{A_0(B_0-\eta)}
(\xi-\eta)^{-\lambda}.
\end{equation}
It follows from \eqref{QQ-42} and \eqref{QQ-41} that
\[
\begin{aligned}
\left|
\Ecal_\lambda(t,s,r-q)
-
\Ecal_\lambda(t,s,r+q)
\right|&\lesssim
\f{(r+q)B_0^\lambda}
{A_0(B_0-\eta)}
(\xi-\eta)^{-\lambda}
\left(r+q-|r-q|\right)\\
&=\f{2(r+q)\min\{r,q\}B_0^\lambda}
{A_0(B_0-\eta)}
(\xi-\eta)^{-\lambda}\\
&\lesssim
\f{rqB_0^\lambda}
{A_0(B_0-\eta)}
(\xi-\eta)^{-\lambda}.
\end{aligned}
\]
This, together with \eqref{QQ-43}, derives \eqref{qq:46}.
\end{proof}

To apply the kernel estimates in Lemma~\ref{lem:characteristic-kernel},
we also need the following integral estimate.

\begin{lemma}
\label{lem:characteristic-integral}
Let $\kappa$ satisfy \eqref{eq:kappa-range}. For $j=1,2$ and
$\xi\geq1$, define
\[
I_j(\xi)
=
\int_{-\xi}^{0}
(\xi+\eta)^j
(\xi-\eta)^{-\beta-\lambda}
(1-\eta)^{-p(\kappa-1)}\dd\eta.
\]
Then
\begin{equation}
\label{eq:Ij}
I_j(\xi)
\lesssim
(1+\xi)^{p+j-1-\kappa-\lambda},
\qquad j=1,2.
\end{equation}
\end{lemma}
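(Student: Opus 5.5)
We substitute $\tau=-\eta\in[0,\xi]$, so that
\[
I_j(\xi)=\int_0^{\xi}(\xi-\tau)^j(\xi+\tau)^{-\beta-\lambda}(1+\tau)^{-p(\kappa-1)}\dd\tau ,
\]
and then bound the smooth factors crudely, leaving a single one-dimensional power integral. Since $\beta+\lambda>0$, we have $(\xi+\tau)^{-\beta-\lambda}\leq\xi^{-\beta-\lambda}$. Also $0\leq\xi-\tau\leq\xi$. Hence
\[
I_j(\xi)\leq \xi^{j-\beta-\lambda}\int_0^{\xi}(1+\tau)^{-p(\kappa-1)}\dd\tau .
\]
Because $\xi\geq1$, we have $\xi\sim1+\xi$, so it remains to estimate $J(\xi)=\int_0^\xi(1+\tau)^{-p(\kappa-1)}\dd\tau$ and compare exponents with the target $p+j-1-\kappa-\lambda$. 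We split into three cases according to the size of $p(\kappa-1)$ relative to $1$.

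\emph{Case $p(\kappa-1)<1$.} Here $J(\xi)\lesssim(1+\xi)^{1-p(\kappa-1)}$, so
\[
I_j\lesssim(1+\xi)^{\,j-\beta-\lambda+1-p\kappa+p}.
\]
This exponent is at most $p+j-1-\kappa-\lambda$ exactly when $(p-1)\kappa\geq2-\beta$, which holds by the lower bound $\kappa>\f{2-\beta}{p-1}$ in \eqref{eq:kappa-range}.

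\emph{Case $p(\kappa-1)>1$.} Here $J$ is bounded, so $I_j\lesssim(1+\xi)^{j-\beta-\lambda}$. This suffices when $j-\beta-\lambda\leq p+j-1-\kappa-\lambda$, i.e.\ $\kappa\leq p+\beta-1$, which is guaranteed by the upper bound in \eqref{eq:kappa-range}.

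\emph{Case $p(\kappa-1)=1$.} Here $J(\xi)\lesssim\log(2+\xi)$, giving $I_j\lesssim(1+\xi)^{j-\beta-\lambda}\log(2+\xi)$. Since the inequality $\kappa<p+\beta-1$ is strict, the target exponent exceeds $j-\beta-\lambda$ by the positive amount $p+\beta-1-\kappa$. Therefore $\log(2+\xi)\lesssim(1+\xi)^{p+\beta-1-\kappa}$, and the logarithm is absorbed.

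The only delicate point is this borderline case $p(\kappa-1)=1$, where a logarithm appears. The strictness of the upper bound in \eqref{eq:kappa-range} is exactly what absorbs it. The other steps are routine comparisons of exponents using the two sides of \eqref{eq:kappa-range}. The constants depend only on $p,\kappa,\beta,\lambda$, and the argument is identical for $j=1,2$.
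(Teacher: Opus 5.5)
Your proof is correct and is essentially the paper's argument: the substitution $\theta=-\eta$, the bound $(\xi+\theta)^{-\beta-\lambda}\leq\xi^{-\beta-\lambda}$, and the same three-way case split on $p(\kappa-1)$ versus $1$. You use the lower bound $(p-1)\kappa>2-\beta$ and the strict upper bound $\kappa<p+\beta-1$ from \eqref{eq:kappa-range} exactly where the paper does, including to absorb the logarithm in the borderline case. The only difference is that you bound $(\xi-\theta)^j\leq\xi^j$ directly instead of splitting the $\theta$-integral at $\xi/2$ as the paper does, which gives the same bounds with slightly less work.
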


\begin{proof}
Set $m=p(\kappa-1)$. By the change of variable $\theta=-\eta$ and
$\xi\leq\xi+\theta\leq2\xi$ for $0\leq\theta\leq\xi$, one can obtain
\[
I_j(\xi)
\lesssim
\xi^{-\beta-\lambda}
\int_0^\xi
(\xi-\theta)^j(1+\theta)^{-m}\dd\theta.
\]
A direct computation yields
\[
\begin{aligned}
\int_0^\xi
(\xi-\theta)^j(1+\theta)^{-m}\dd\theta
&\lesssim
\xi^j
\int_0^{\xi/2}
(1+\theta)^{-m}\dd\theta
+
\xi^{-m}
\int_{\xi/2}^{\xi}
(\xi-\theta)^j\dd\theta\\
&=
\xi^j
\int_0^{\xi/2}
(1+\theta)^{-m}\dd\theta
+
\xi^{-m}
\int_0^{\xi/2}
y^j\dd y\\
&\lesssim
\begin{cases}
\xi^{j+1-m},&0<m<1,\\
\xi^j\log(2+\xi),&m=1,\\
\xi^j,&m>1.
\end{cases}
\end{aligned}
\]
Consequently,
\[
I_j(\xi)
\lesssim
\begin{cases}
\xi^{j+1-m-\beta-\lambda},&0<m<1,\\
\xi^{j-\beta-\lambda}\log(2+\xi),&m=1,\\
\xi^{j-\beta-\lambda},&m>1.
\end{cases}
\]
For $0<m<1$ and $\xi\geq1$, one has
\[
\xi^{j+1-m-\beta-\lambda}
\lesssim
(1+\xi)^{p+j-1-\kappa-\lambda},
\]
where we have used the fact of $(p-1)\kappa>2-\beta$. For $m=1$ and
$\xi\geq1$, one then has
\begin{equation}\label{DELTA}
\xi^{j-\beta-\lambda}\log(2+\xi)
\lesssim
\xi^{j-\beta-\lambda}(1+\xi)^\delta
\lesssim
(1+\xi)^{p+j-1-\kappa-\lambda},
\end{equation}
where $\delta=p+\beta-1-\kappa>0$. For $m>1$ and $\xi\geq1$, due to
$\kappa<p+\beta-1$, we then have
\[
\xi^{j-\beta-\lambda}
\lesssim
(1+\xi)^{p+j-1-\kappa-\lambda}.
\]
Therefore, \eqref{eq:Ij} holds for $j=1,2$.
\end{proof}

Based on Lemmas \ref{lem:nonlinear-representation}-\ref{lem:characteristic-integral}, we now prove the following weighted estimates.

\begin{proposition}
\label{prop:nonlinear}
Let $\kappa$ satisfy \eqref{eq:kappa-range}. Then, for any
$w\in X_\kappa$, $t\geq1$, and $0\leq r\leq t$,
\begin{equation}
\label{qq:47}
|\Tcal w(t,r)|
+
\f{1}{1+r}
\left|
\p_r(r\Tcal w)(t,r)
\right|
\lesssim
\|w\|_{X_\kappa}^{p}
\Acal(t,r)^{-1}\Bcal(t,r)^{1-\kappa}.
\end{equation}
In particular,
\begin{equation}
\label{qq:48}
\|\Tcal w\|_{X_\kappa}
\lesssim
\|w\|_{X_\kappa}^{p}.
\end{equation}
Moreover, for any $w,v\in X_\kappa$,
\begin{equation}
\label{qq:49}
\|\Tcal w-\Tcal v\|_{X_\kappa}
\lesssim
\big(
\|w\|_{X_\kappa}^{p-1}
+
\|v\|_{X_\kappa}^{p-1}
\big)
\|w-v\|_{X_\kappa}.
\end{equation}
\end{proposition}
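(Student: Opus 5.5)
Starting from the representation of Lemma~\ref{lem:nonlinear-representation}, the plan is to bound each piece of \eqref{qq:40} and \eqref{qq:42} by $\|w\|_{X_\kappa}^p$ times $r\Acal^{-1}\Bcal^{1-\kappa}$ for $r(\Tcal w)$ and $(1+r)\Acal^{-1}\Bcal^{1-\kappa}$ for $\p_r(r\Tcal w)$. The input is the pointwise bound
\[
|G_w(s,q)|\le q s^{-\beta}\|w\|_{X_\kappa}^p (1+s+q)^{-p}(1+s-q)^{-p(\kappa-1)},
\]
valid on the support $0\le q\le s$. In the characteristic coordinates $\xi=s+q$, $\eta=q-s\in[-\xi,0]$ this becomes
\[
|G_w|\lesssim \|w\|_{X_\kappa}^p(\xi+\eta)\,(\xi-\eta)^{-\beta}(1+\xi)^{-p}(1-\eta)^{-p(\kappa-1)}.
\]
Here we use $q=(\xi+\eta)/2$ and $s=(\xi-\eta)/2$. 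The Jacobian $\dd q\,\dd s=\frac12\dd\xi\,\dd\eta$ is harmless.

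For the first (``characteristic'') integral in \eqref{qq:40}, the domain $|t-s-r|\le q\le t-s+r$ corresponds to $B_0\le\xi\le A_0$. We insert \eqref{qq:45}, so the weight $(\xi-\eta)^{-\lambda}$ combines with $(\xi-\eta)^{-\beta}$. The $\eta$-integral is then exactly $I_1(\xi)$ of Lemma~\ref{lem:characteristic-integral}, giving $\xi^\lambda(1+\xi)^{-p}I_1(\xi)\lesssim(1+\xi)^{-\kappa}$. Integrating in $\xi$ over $[B_0,A_0]$ gives
\[
\int_{B_0}^{A_0}(1+\xi)^{-\kappa}\dd\xi\lesssim \min\{(1+B_0)^{1-\kappa},\,(A_0-B_0)(1+B_0)^{-\kappa}\}.
\]
Since $A_0-B_0=2r$, this is $\lesssim r\Acal^{-1}\Bcal^{1-\kappa}$: when $r\gtrsim t$ we use $\Acal\sim r$, and when $r\ll t$ we use the length bound with $\Bcal\sim\Acal$.

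For the boundary terms in \eqref{qq:42}, we integrate $G_w$ along the two rays $\eta=\pm$const. The ray $q=t-s+r$ sits at $\xi=A_0$, so it gains $\Acal^{-p}$ times $\int(1+s-q)^{-p(\kappa-1)}q\,s^{-\beta}\dd s$. The ray $q=t-s-r$ sits at $\xi=B_0$, giving $\Bcal^{-p}$ times a similar integral. Using $\kappa<p+\beta-1$ and $p(\kappa-1)+\beta>1$ (which follows from \eqref{eq:kappa-range}), both are $\lesssim(1+r)\Acal^{-1}\Bcal^{1-\kappa}$. The derivative-of-kernel characteristic term is handled like the first integral with $I_1$ and $\xi^{\lambda-1}$ in place of $\xi^\lambda$, which gains a factor $\xi^{-1}$.

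The interior pieces (the kernel differences over $\xi\le B_0$) are the main obstacle and are where \eqref{qq:46} is used. For the difference term we bound it by
\[
\frac{r\,q\,B_0^\lambda}{A_0(B_0-\eta)}(\xi-\eta)^{-\lambda}|G_w|.
\]
The extra factor $q\sim\xi+\eta$ produces $I_2$, whose bound is $(1+\xi)^{p+1-\kappa-\lambda}$. The $\xi$-integral is then
\[
\int_1^{B_0}(1+\xi)^{1-\kappa-\lambda}B_0^\lambda(B_0-\eta)^{-1}\dd\xi .
\]
Using $B_0-\eta\ge B_0$ and $1-\kappa-\lambda>-1$, which holds precisely because $\kappa<2-\lambda$, this gives $B_0^{2-\kappa-\lambda}B_0^{\lambda-1}=B_0^{1-\kappa}$. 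The total is therefore $rA_0^{-1}B_0^{1-\kappa}$ as needed. The derivative-difference term uses the second line of \eqref{qq:46} with $r+q$. The $q$-part is identical to the above. The $r$-part uses $I_1$, giving $rA_0^{-1}B_0^{\lambda-1}\int^{B_0}(1+\xi)^{-\kappa-\lambda}$, which is again fine.

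Several points need care:
\begin{itemize}
\item The delicate balance here is that $(B_0-\eta)^{-1}$ must absorb exactly one power of $B_0$. This is the only place the restriction $\kappa<2-\lambda$ and the pairing cancellation are genuinely used, so this step needs checking with the logarithmic borderline cases.
\item Near $r=0$ we obtain $|\Tcal w|=|r\Tcal w|/r$ directly, since every bound carries a factor $r$.
\item When $t-r\le 1$ the interior terms vanish.
\end{itemize}
This gives \eqref{qq:47} and hence \eqref{qq:48}. For \eqref{qq:49}, we use $||w|^p-|v|^p|\le p(|w|^{p-1}+|v|^{p-1})|w-v|$ in $G_w-G_v$. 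The resulting source has the same pointwise weights with $\|w\|^p$ replaced by $(\|w\|^{p-1}+\|v\|^{p-1})\|w-v\|$, and the identical estimates apply by linearity of $\Tcal$ in the source.
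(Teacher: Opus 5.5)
Your plan follows the paper's proof. It uses the same splitting from Lemma~\ref{lem:nonlinear-representation}, the same kernel bounds \eqref{qq:45}--\eqref{qq:46}, the same reduction to $I_1,I_2$, and the same interior kernel-difference estimate (using $B_0-\eta\ge B_0$ and $\kappa<2-\lambda$). The Lipschitz argument for \eqref{qq:49} is also the same, and those parts are fine. The gap is in the boundary term $4^\lambda\int_1^t G_w(s,t-s-r)\dd s$ of \eqref{qq:42}.

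You place this whole term on $\xi=B_0$ with a factor $\Bcal^{-p}$. But your pointwise bound for $G_w$ is stated only for $0\le q\le s$. The argument $t-s-r=B_0-s$ is negative for every $s\in(B_0,t]$, and for every $s\ge1$ if $B_0\le1$: this part of the backward characteristic has crossed the axis $r=0$. There you must use oddness, $|G_w(s,B_0-s)|=|G_w(s,s-B_0)|$. The reflected point lies on $s-q=B_0$, not on $s+q=B_0$. So the weight $(1+s-q)^{-p(\kappa-1)}$ is frozen at $\Bcal^{-p(\kappa-1)}$, and all decay in $s$ must come from $(1+s+q)^{-p}=(1+2s-B_0)^{-p}$ and $s^{-\beta}$. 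The paper's estimate (for $B_0>1$) is
\[
\int_{B_0}^{t}|G_w(s,s-B_0)|\dd s\lesssim\|w\|_{X_\kappa}^p\Bcal^{-p(\kappa-1)}\int_{B_0}^{\infty}(s-B_0)s^{-\beta}(1+2s-B_0)^{-p}\dd s\lesssim\|w\|_{X_\kappa}^p\Bcal^{-p(\kappa-1)+2-p-\beta}\lesssim\|w\|_{X_\kappa}^p\Bcal^{-\kappa}.
\]
Convergence of the $s$-integral needs $p+\beta>2$, which holds in the present range but is not on your list. The final step needs $(p-1)\kappa>2-\beta$. For $B_0\le1$ the term reduces to $\int_1^\infty(1+s)^{1-p-\beta}\dd s\lesssim1$.

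This omission is not cosmetic. Suppose you apply your description literally on the whole ray, taking the factor $\Bcal^{-p}$ from $1+s+\rho$ and $(1+s-\rho)^{-p(\kappa-1)}=(1+2s-B_0)^{-p(\kappa-1)}$. The $s$-integral then behaves like $\int^t s^{1-\beta-p(\kappa-1)}\dd s$. This diverges, because $p(\kappa-1)+\beta<2$ for typical admissible parameters: $\mu=\f{14}{5}$, $p=1.7$, $\kappa=1.5$ give $p(\kappa-1)+\beta\approx1.83$. Near the cone the target $(1+r)\Acal^{-1}\Bcal^{1-\kappa}$ is $O(1)$, but this bound would grow like a positive power of $t$.

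Relatedly, the condition you cite for the ray terms, $p(\kappa-1)+\beta>1$, is strictly weaker than what is needed. When $p(\kappa-1)<1$, the outer ray yields $\Acal^{1-p-\beta}\Bcal^{1-p(\kappa-1)}$. Closing this for $r\ll t$ requires the lower bound $(p-1)\kappa>2-\beta$ in \eqref{eq:kappa-range}. Once the reflected piece is added and these conditions are corrected, the rest of your plan goes through as in the paper.
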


\begin{proof}
Set
$
M=\|w\|_{X_\kappa}.
$
The support condition in $X_\kappa$ gives $G_w(s,q)=0$ for $q>s$.
For $0\leq q\leq s$, it follows from the definition of $X_\kappa$ in \eqref{eq:Xkappa}
that
\begin{equation}
\label{qq:50'}
|w(s,q)|
\leq
M(1+s+q)^{-1}(1+s-q)^{1-\kappa},
\end{equation}
and then
\begin{equation}
\label{qq:50}
|G_w(s,q)|
\lesssim
M^p q s^{-\beta}
(1+s+q)^{-p}
(1+s-q)^{-p(\kappa-1)}.
\end{equation}

We first estimate $r(\Tcal w)(t,r)$ for $r>0$. To this end, we consider the first
term on the right-hand side of \eqref{qq:40}
\[
\int_1^t
\int_{|t-s-r|}^{t-s+r}
\Ecal_\lambda(t,s,r-q)
G_w(s,q)\dd q\dd s.
\]
Under the change of variables in \eqref{qq:44}, the condition
$0\leq q\leq s$ implies
$
-\xi\leq\eta\leq0.
$
Moreover, recalling that $B_0=t-r$ and $A_0=t+r$ in \eqref{qq:44},
the condition $|t-s-r|\leq q\leq t-s+r$ gives
\[
\xi=s+q
\geq
s+|B_0-s|
\geq
B_0,
\quad
\xi\geq1,
\quad
\xi=s+q\leq A_0.
\]
Then
\[
\max\{1,B_0\}\leq\xi\leq A_0.
\]
Therefore, it follows from \eqref{qq:44}-\eqref{qq:45}, \eqref{qq:50} and \eqref{eq:Ij} that
\begin{equation}\label{QQ-44}
\begin{aligned}
\big|
\int_1^t
\int_{|t-s-r|}^{t-s+r}
\Ecal_\lambda(t,s,r-q)
G_w(s,q)\dd q\dd s
\big|
&\lesssim
M^p
\int_{\max\{1,B_0\}}^{A_0}
\xi^\lambda(1+\xi)^{-p}
I_1(\xi)\dd\xi\\
&\lesssim
M^p
\int_{B_0}^{A_0}
(1+\xi)^{-\kappa}\dd\xi\\
&=
\f{M^p}{\kappa-1}
\big[
\Bcal(t,r)^{1-\kappa}
-
\Acal(t,r)^{1-\kappa}
\big]\\
&=\f{M^p}{\kappa-1}\Bcal(t,r)^{1-\kappa}
\big[
1-
\big(
\f{\Bcal(t,r)}{\Acal(t,r)}
\big)^{\kappa-1}
\big]\\
&\lesssim
M^p r
\Acal(t,r)^{-1}
\Bcal(t,r)^{1-\kappa},
\end{aligned}
\end{equation}
where the last inequality follows from
$1-z^{\kappa-1}\lesssim1-z$ for $0\leq z\leq1$ and
$\Acal(t,r)-\Bcal(t,r)=2r$.

We next treat the remaining term on the right-hand side of
\eqref{qq:40}
\begin{equation}\label{YHC-020}
\begin{aligned}
\int_1^{t-r}
\int_0^{t-s-r}
\left[
\Ecal_\lambda(t,s,r-q)
-
\Ecal_\lambda(t,s,r+q)
\right]
G_w(s,q)\dd q\dd s.
\end{aligned}
\end{equation}
Note that this term vanishes when $B_0=t-r\leq1$. For $B_0>1$, the
integration region in \eqref{YHC-020} is
\[
1\leq s\leq B_0,
\quad
0\leq q\leq\min\{s,B_0-s\},
\]
which  imply
\[
1\leq\xi\leq B_0,
\quad
-\xi\leq\eta\leq0,\quad B_0-\eta\geq B_0.
\]
Then it follows from \eqref{qq:46},
\eqref{qq:50} and \eqref{eq:Ij} that
\begin{equation}\label{QQ-45}
\begin{aligned}
&\big|
\int_1^{t-r}
\int_0^{t-s-r}
\left[
\Ecal_\lambda(t,s,r-q)
-
\Ecal_\lambda(t,s,r+q)
\right]
G_w(s,q)\dd q\dd s
\big|\\
&\lesssim
M^p rA_0^{-1}B_0^{\lambda-1}
\int_1^{B_0}
(1+\xi)^{-p}I_2(\xi)\dd\xi\\
&\lesssim
M^p rA_0^{-1}B_0^{\lambda-1}
\int_1^{B_0}
(1+\xi)^{1-\kappa-\lambda}\dd\xi\\
&\lesssim
M^p rA_0^{-1}B_0^{1-\kappa}\\
&\lesssim
M^p r
\Acal(t,r)^{-1}
\Bcal(t,r)^{1-\kappa},
\end{aligned}
\end{equation}
where the third and last inequalities follow from
$\kappa+\lambda<2$ together with
$A_0\sim\Acal(t,r)$ and $B_0\sim\Bcal(t,r)$ for $B_0>1$. Combining \eqref{QQ-44} with
\eqref{QQ-45} yields
\begin{equation}
\label{qq:51}
|r(\Tcal w)(t,r)|
\lesssim
M^p r
\Acal(t,r)^{-1}
\Bcal(t,r)^{1-\kappa}.
\end{equation}

We next turn to deal with $\p_r(r\Tcal w)(t,r)$. From \eqref{qq:42}, we now estimate the first term of $\p_r(r\Tcal w)(t,r)$
\begin{equation}\label{YHC-08}
\begin{aligned}
4^\lambda
\int_1^t
\left[
G_w(s,t-s+r)
+
G_w(s,t-s-r)
\right]\dd s.
\end{aligned}
\end{equation}
For $G_w(s,t-s+r)$, the support condition
$t-s+r=A_0-s\leq s$ implies $s\geq\f{A_0}{2}$.
Setting $\theta=2s-A_0$, one has
\begin{equation}\label{QQ-46}
0\leq\theta\leq B_0,
\quad
1+s+(t-s+r)=\Acal(t,r),
\quad
1+s-(t-s+r)=1+\theta.
\end{equation}
Moreover, $q=t-s+r\leq A_0$ and $s\geq \f{A_0}{2}$ imply
\begin{equation}\label{QQ-46'}
q s^{-\beta}
\lesssim
A_0^{1-\beta}
\lesssim
\Acal(t,r)^{1-\beta}.
\end{equation}
Combining \eqref{QQ-45},  \eqref{QQ-46'} and \eqref{qq:50} yields
\begin{equation}\label{QQ-49}
\begin{aligned}
\int_1^t
|G_w(s,t-s+r)|\dd s
&\lesssim
M^p
\Acal(t,r)^{1-p-\beta}
\int_0^{B_0}
(1+\theta)^{-p(\kappa-1)}\dd\theta\lesssim
M^p\Acal(t,r)^{-\kappa}.
\end{aligned}
\end{equation}
Indeed, if $p(\kappa-1)<1$, then
\[
\int_0^{B_0}
(1+\theta)^{-p(\kappa-1)}\dd\theta
\lesssim
\Bcal(t,r)^{1-p(\kappa-1)}
\leq
\Acal(t,r)^{1-p(\kappa-1)}.
\]
Hence
\begin{equation}\label{YHC-09}
\begin{aligned}
M^p\Acal(t,r)^{1-p-\beta}
\int_0^{B_0}
(1+\theta)^{-p(\kappa-1)}\dd\theta
\lesssim M^p
\Acal(t,r)^{2-p-\beta-p(\kappa-1)}
\lesssim
M^p\Acal(t,r)^{-\kappa},
\end{aligned}
\end{equation}
where  we have used the fact of
$(p-1)\kappa>2-\beta$ in the last inequality.
If $p(\kappa-1)>1$, then the integral $\int_0^{B_0}
(1+\theta)^{-p(\kappa-1)}\dd\theta$ is uniformly bounded, and thus the condition
$\kappa<p+\beta-1$ implies \eqref{QQ-49}.
If $p(\kappa-1)=1$, as treated in \eqref{DELTA}, then \eqref{YHC-09} still holds.

We next treat the term which comes from \eqref{YHC-08}
\begin{equation}\label{YHC-011}
\begin{aligned}
\int_1^t
|G_w(s,t-s-r)|\dd s
=
\int_1^t
|G_w(s,B_0-s)|\dd s.
\end{aligned}
\end{equation}
First, it is assumed that $B_0>1$. Then
\begin{equation}\label{YHC-010}
\begin{aligned}
\int_1^t
|G_w(s,B_0-s)|\dd s
=
\int_1^{B_0}
|G_w(s,B_0-s)|\dd s
+
\int_{B_0}^{t}
|G_w(s,B_0-s)|\dd s .
\end{aligned}
\end{equation}
The first integral on the right-hand side of \eqref{YHC-010} follows from the estimate for
$G_w(s,t-s+r)$ by replacing $A_0$ with $B_0$. Thus,
\begin{equation}
\label{qq:54}
\int_1^{B_0}
|G_w(s,B_0-s)|\dd s
\lesssim
M^p\Bcal(t,r)^{-\kappa}.
\end{equation}
For the second integral on the right-hand side of \eqref{YHC-010}, by $s\geq B_0$ and the oddness of
$G_w(s,\cdot)$, one has
\[
|G_w(s,B_0-s)|
=|G_w(s,s-B_0)|.
\]
Using \eqref{qq:50} with
$q=s-B_0$ and $\xi=s+q=2s-B_0$, we arrive at
\begin{equation}
\label{qq:55}
\begin{aligned}
&\int_{B_0}^{t}
|G_w(s,B_0-s)|\dd s\lesssim
M^p(1+B_0)^{-p(\kappa-1)}
\int_{B_0}^{A_0}
(\xi+B_0)^{-\beta}
(1+\xi)^{-p}
(\xi-B_0)\dd\xi.
\end{aligned}
\end{equation}
In addition, by $B_0>1$ and $p+\beta>2$, it holds that
\begin{equation}
\label{qq:56}
\begin{aligned}
&\int_{B_0}^{A_0}
(\xi+B_0)^{-\beta}
(1+\xi)^{-p}
(\xi-B_0)\dd\xi\lesssim
\int_{B_0}^{2B_0}
B_0^{-\beta-p}
(\xi-B_0)\dd\xi
+\int_{2B_0}^{\infty}
\xi^{1-p-\beta}\dd\xi\lesssim
B_0^{2-p-\beta}.
\end{aligned}
\end{equation}
Then it follows from \eqref{qq:55} and the condition $(p-1)\kappa>2-\beta$ that
\begin{equation}
\label{qq:57}
\begin{aligned}
\int_{B_0}^{t}
|G_w(s,B_0-s)|\dd s
&\lesssim
M^p
\Bcal(t,r)^{-p(\kappa-1)}
B_0^{2-p-\beta}\lesssim
M^p\Bcal(t,r)^{-\kappa}.
\end{aligned}
\end{equation}
Combining \eqref{qq:54} and \eqref{qq:57} yields that for $B_0>1$,
\begin{equation}
\label{qq:58}
\int_1^t
|G_w(s,t-s-r)|\dd s
\lesssim
M^p\Bcal(t,r)^{-\kappa}.
\end{equation}
Next, consider the case of $B_0\leq1$ in \eqref{YHC-011}. Due to
$\Bcal(t,r)=1+B_0\sim1$,
we can arrive at $|B_0-s|=s-B_0$ for $s\geq1$. Substituting
$q=s-B_0$ into \eqref{qq:50} yields
\[
\begin{aligned}
|G_w(s,B_0-s)|
&\lesssim
M^p(s-B_0)s^{-\beta}
(1+2s-B_0)^{-p}
(1+B_0)^{-p(\kappa-1)}\lesssim
M^p(1+s)^{1-p-\beta}.
\end{aligned}
\]
Then the conditions $p+\beta>2$ and $\Bcal(t,r)\sim1$ imply
\[
\int_1^t
|G_w(s,B_0-s)|\dd s
\lesssim
M^p\int_1^\infty
(1+s)^{1-p-\beta}\dd s
\lesssim
M^p,
\]
which means
\begin{equation}\label{QQ-48}
\int_1^t
|G_w(s,t-s-r)|\dd s
\lesssim
M^p\Bcal(t,r)^{-\kappa}.
\end{equation}
Therefore, collecting the results in \eqref{QQ-49}, \eqref{qq:58} and  \eqref{QQ-48}, we conclude
\begin{equation}\label{QQ-50}
\begin{aligned}
4^\lambda \big|
\int_1^t
\left[
G_w(s,t-s+r)
+G_w(s,t-s-r)
\right]\dd s \big| 
&\lesssim
M^p
\left(
\Acal(t,r)^{-\kappa}
+\Bcal(t,r)^{-\kappa}
\right)\\
&\lesssim
M^p(1+r)
\Acal(t,r)^{-1}
\Bcal(t,r)^{1-\kappa},
\end{aligned}
\end{equation}
where in the last inequality we have used the facts of
$\Acal(t,r)=\Bcal(t,r)+2r$ and $\Bcal(t,r)\geq1$.

We next consider the second term on the right-hand side of
\eqref{qq:42}, namely,
\begin{equation}\label{YHC-013}
\begin{aligned}
\int_1^t
\int_{|t-s-r|}^{t-s+r}
\partial_y\Ecal_\lambda(t,s,r-q)
G_w(s,q)\dd q\dd s .
\end{aligned}
\end{equation}
Using the change of variables in \eqref{qq:44},  from the integration region in \eqref{YHC-013}, one has
\[
\xi=s+q\leq A_0,
\quad
\xi=s+q\geq s+|B_0-s|\geq B_0,
\]
while $\xi=s+q\geq1$. Hence,
\[
\max\{1,B_0\}\leq\xi\leq A_0.
\]
It follows from \eqref{qq:45} and \eqref{qq:50} that
\begin{equation}
\label{qq:62}
\begin{aligned}
&
\big|
\int_1^t
\int_{|t-s-r|}^{t-s+r}
\partial_y\Ecal_\lambda(t,s,r-q)
G_w(s,q)\dd q\dd s
\big|
\lesssim
M^p
\int_{\max\{1,B_0\}}^{A_0}
\xi^{\lambda-1}
(1+\xi)^{-p}
I_1(\xi)
\dd\xi .
\end{aligned}
\end{equation}
Applying \eqref{eq:Ij} with $j=1$ yields
\[
I_1(\xi)
\lesssim
(1+\xi)^{p-\kappa-\lambda}.
\]
Therefore,
\begin{equation}
\label{qq:63}
\begin{aligned}
&
\big|
\int_1^t
\int_{|t-s-r|}^{t-s+r}
\partial_y\Ecal_\lambda(t,s,r-q)
G_w(s,q)\dd q\dd s
\big|
\lesssim
M^p
\int_{B_0}^{A_0}
(1+\xi)^{-\kappa-1}
\dd\xi .
\end{aligned}
\end{equation}
Similarly treated as in \eqref{QQ-44}, one has
\begin{equation}
\label{qq:64}
\begin{aligned}
&
\big|
\int_1^t
\int_{|t-s-r|}^{t-s+r}
\partial_y\Ecal_\lambda(t,s,r-q)
G_w(s,q)\dd q\dd s
\big|
\lesssim
M^p
r\Acal(t,r)^{-1}
\Bcal(t,r)^{1-\kappa}.
\end{aligned}
\end{equation}

It remains to consider the last term on the right-hand side of
\eqref{qq:42}
\[
\int_1^{t-r}
\int_0^{t-s-r}
\left[
\partial_y\Ecal_\lambda(t,s,r-q)
-
\partial_y\Ecal_\lambda(t,s,r+q)
\right]
G_w(s,q)\dd q\dd s,
\]
which vanishes when $B_0\leq1$. For $B_0>1$, by the same
change of variables as in the proof of \eqref{QQ-45}, together
with \eqref{qq:46}, \eqref{qq:50} and \eqref{eq:Ij}, we have
\begin{equation}
\label{qq:64'}
\begin{aligned}
&
\big|
\int_1^{t-r}
\int_0^{t-s-r}
\left[
\partial_y\Ecal_\lambda(t,s,r-q)
-
\partial_y\Ecal_\lambda(t,s,r+q)
\right]
G_w(s,q)\dd q\dd s
\big|\\
&\lesssim
M^p
A_0^{-1}B_0^{\lambda-1}
\big[
r
\int_1^{B_0}
(1+\xi)^{-p}
I_1(\xi)\dd\xi
+
\int_1^{B_0}
(1+\xi)^{-p}
I_2(\xi)\dd\xi
\big].
\end{aligned}
\end{equation}
By \eqref{eq:Ij}, one can obtain
\[
\begin{aligned}
&
r
\int_1^{B_0}
(1+\xi)^{-p}
I_1(\xi)\dd\xi
+
\int_1^{B_0}
(1+\xi)^{-p}
I_2(\xi)\dd\xi\lesssim
r
\int_1^{B_0}
(1+\xi)^{-\kappa-\lambda}
\dd\xi
+
\int_1^{B_0}
(1+\xi)^{1-\kappa-\lambda}
\dd\xi .
\end{aligned}
\]
Since
$
1<\kappa+\lambda<2,
$
it follows that
\[
\begin{aligned}
&
r
\int_1^{B_0}
(1+\xi)^{-\kappa-\lambda}
\dd\xi
+
\int_1^{B_0}
(1+\xi)^{1-\kappa-\lambda}
\dd\xi\lesssim
r+B_0^{2-\kappa-\lambda}.
\end{aligned}
\]
Then
\begin{equation}
\label{qq:65}
\begin{aligned}
&
\big|
\int_1^{t-r}
\int_0^{t-s-r}
\left[
\partial_y\Ecal_\lambda(t,s,r-q)
-
\partial_y\Ecal_\lambda(t,s,r+q)
\right]
G_w(s,q)\dd q\dd s
\big|\\
&\lesssim
M^pA_0^{-1}B_0^{\lambda-1}
\big(
r+B_0^{2-\kappa-\lambda}
\big)\\
&\lesssim
M^pA_0^{-1}
\big(
rB_0^{\lambda-1}
+B_0^{1-\kappa}
\big).
\end{aligned}
\end{equation}
Due to $B_0>1$, one has
$A_0\sim\Acal(t,r)$ and $B_0\sim\Bcal(t,r)$.
Then it follows from the condition $\kappa<2-\lambda$  that
\begin{equation}
\label{qq:66}
\begin{aligned}
&
\big|
\int_1^{t-r}
\int_0^{t-s-r}
\left[
\partial_y\Ecal_\lambda(t,s,r-q)
-\partial_y\Ecal_\lambda(t,s,r+q)
\right]
G_w(s,q)\dd q\dd s
\big|\\
&\lesssim
M^p\Acal(t,r)^{-1}
\left(
r\Bcal(t,r)^{1-\kappa}
+\Bcal(t,r)^{1-\kappa}
\right)\\
&\lesssim
M^p(1+r)
\Acal(t,r)^{-1}
\Bcal(t,r)^{1-\kappa}.
\end{aligned}
\end{equation}
By \eqref{QQ-50}, \eqref{qq:64} and
\eqref{qq:66}, we can conclude
\begin{equation}
\label{qq:67}
\left|
\partial_r(r\Tcal w)(t,r)
\right|
\lesssim
M^p(1+r)
\Acal(t,r)^{-1}
\Bcal(t,r)^{1-\kappa}.
\end{equation}
Combining \eqref{qq:51} and \eqref{qq:67} yields that for $r>0$,
\begin{equation}
\label{qq:68}
|\Tcal w(t,r)|
+\frac1{1+r}
\left|
\partial_r(r\Tcal w)(t,r)
\right|
\lesssim
M^p
\Acal(t,r)^{-1}
\Bcal(t,r)^{1-\kappa}.
\end{equation}
For $r=0$, the oddness of $r\Tcal w(t,r)$ with respect to $r$
implies
\[
\Tcal w(t,0)
=\lim_{r\to0}
\frac{r\Tcal w(t,r)}{r}
=\partial_r(r\Tcal w)(t,0).
\]
The estimate at $r=0$ follows from \eqref{qq:67} by letting
$r$ approach zero.
Hence, \eqref{qq:47} holds for $0\leq r\leq t$.
Then \eqref{qq:48} follows immediately from the definition of
$X_\kappa$ and \eqref{qq:47}.

It remains to prove \eqref{qq:49}. Notice that for any $w,v\in X_\kappa$, one has
\[
\bigl||w|^p-|v|^p\bigr|
\lesssim
|w-v|
\left(
|w|^{p-1}
+|v|^{p-1}
\right).
\]
Applying \eqref{qq:50'} to $w$, $v$, and $w-v$, respectively,
one obtains
\[
\begin{aligned}
|G_w(s,q)-G_v(s,q)|
&\lesssim
q s^{-\beta}
\bigl||w(s,q)|^p-|v(s,q)|^p\bigr|
\\
&\lesssim
\big(\|w\|_{X_\kappa}^{p-1}
+\|v\|_{X_\kappa}^{p-1}
\big)
\|w-v\|_{X_\kappa}
q s^{-\beta}
(1+s+q)^{-p}
(1+s-q)^{-p(\kappa-1)}.
\end{aligned}
\]
Since the above expression has the same form as \eqref{qq:50},
the estimates derived above remain valid with $G_w$ replaced by
$G_w-G_v$.
Therefore, \eqref{qq:49}  follows immediately from the resulting
estimate and the definition of $X_\kappa$. This completes the proof.
\end{proof}
\section{Proof of Theorem~\ref{thm:main}}\label{Sect5}

Based on Proposition~\ref{prop:linear} and
Proposition~\ref{prop:nonlinear}, we now prove Theorem~\ref{thm:main}.
\vskip 0.1 true cm

\begin{proof}
[Proof of Theorem~\ref{thm:main}]
Define the mapping
\[
\Phi (w)=w_{\rm lin}+\Tcal w,
\]
where $w_{\rm lin}$ denotes the solution of the linear homogeneous
problem considered in Section~\ref{sec:linear-estimates}, and $\Tcal$ is the nonlinear
operator defined in Section~\ref{sec:nonlinear-Riemann-operator}.
For a fixed constant $M>0$, we introduce the closed subset
\[
X_\kappa(M\varepsilon)
=
\{w\in X_\kappa:\|w\|_{X_\kappa}\leq M\varepsilon\}.
\]
Choosing
$M=3C$, we now show that $\Phi$ is a contraction mapping on
$X_\kappa(M\varepsilon)$.

Indeed, for any $w\in X_\kappa(M\varepsilon)$, Propositions
\ref{prop:linear} and \ref{prop:nonlinear} imply that
\[
\begin{aligned}
\|\Phi(w)\|_{X_\kappa}
&\leq
\|w_{\rm lin}\|_{X_\kappa}
+
\|\Tcal w\|_{X_\kappa}
\lesssim
\varepsilon+\|w\|_{X_\kappa}^{p}
\leq
C\varepsilon+C(M\varepsilon)^p
=
\left(
C+CM^p\varepsilon^{p-1}
\right)\varepsilon.
\end{aligned}
\]
Choose $\varepsilon_1>0$ sufficiently small such that
\[
CM^p\varepsilon_1^{p-1}\leq2C .
\]
Then, for any $0<\varepsilon\leq\varepsilon_1$, one has
\[
\|\Phi(w)\|_{X_\kappa}
\leq
3C\varepsilon
=
M\varepsilon .
\]
Thus, $\Phi$ maps $X_\kappa(M\varepsilon)$ into itself.

Moreover, for any $w,v\in X_\kappa(M\varepsilon)$, it follows from
Proposition~\ref{prop:nonlinear} that
\[
\begin{aligned}
\|\Phi(w)-\Phi(v)\|_{X_\kappa}
&=
\|\Tcal w-\Tcal v\|_{X_\kappa}
\lesssim
\big(\|w\|_{X_\kappa}^{p-1}
+\|v\|_{X_\kappa}^{p-1}
\big)
\|w-v\|_{X_\kappa}
\leq
2C(M\varepsilon)^{p-1}
\|w-v\|_{X_\kappa}.
\end{aligned}
\]
Choose $\varepsilon_2>0$ sufficiently small such that
\[
2C(M\varepsilon_2)^{p-1}\leq\frac12 .
\]
Then, for any $0<\varepsilon\leq\varepsilon_2$, one can obtain
\[
\|\Phi(w)-\Phi(v)\|_{X_\kappa}
\leq
\frac12
\|w-v\|_{X_\kappa}.
\]
Taking
$
\varepsilon_0=\min\{\varepsilon_1,\varepsilon_2\},
$
we can conclude that, for any
$0<\varepsilon\leq\varepsilon_0$, $\Phi$ is a contraction mapping on
$X_\kappa(M\varepsilon)$.
Therefore, there exists a unique fixed point
$w\in X_\kappa(M\varepsilon)$ satisfying
$
\Phi w=w.
$
The corresponding function
\[
u(t,r)=t^{-\frac{\mu}{2}}w(t,r)
\]
is a unique global radial solution of \eqref{qq:1}.
By $w\in X_\kappa$, both $w$ and $\p_r(rw)$ are continuous. For
$r\neq0$, the identity
$w_r=r^{-1}\bigl(\p_r(rw)-w\bigr)$
shows that $w_r$ is continuous. Together with the continuity of $w$
and the condition $p>1$, one obtains the continuity of $G_w$ and
$\p_rG_w$ for $r\neq0$.
Hence, the integral formula defining $\Phi$ is differentiated in
$t$ and $r$ for $r\neq0$, which yields
$\psi=rw\in C^2([1,\infty)\times(\R\setminus\{0\}))$.
Therefore,
$u(t,x)=t^{-\mu/2}w(t,|x|)$ satisfies
\[
u\in C\bigl([1,\infty)\times\Bbb R^3\bigr)
\cap C^2\bigl([1,\infty)\times(\Bbb R^3\setminus\{0\})\bigr).
\]
This completes the proof of Theorem~\ref{thm:main}.
\end{proof}

\vskip 0.1 true cm

{\bf Data availability statement}. All data that support the findings of this study
are included within the article (and any supplementary files).


\end{document}